\numberwithin{equation}{section}
\newcommand{\Z}{{\mathbb Z}}
\newtheorem{thm}{Theorem}[section]
\newtheorem{lemma}[thm]{Lemma}
\title{Factored closed-form expressions for the sums of cubes of Fibonacci and Lucas numbers\thanks{AMS Classification Numbers : 11B37, 11B39}\vspace{10mm}}
\author[]{Kunle Adegoke \thanks{adegoke00@gmail.com, kunle.adegoke@yandex.com}}
\affil{Department of Physics and Engineering Physics, \mbox{Obafemi Awolowo University}, Ile-Ife, Nigeria}
\begin{document}

\date{}

\maketitle

\begin{abstract}
\noindent We obtain {\em explicit} factored closed-form  expressions for Fibonacci and Lucas sums of the form \mbox{$\sum_{k = 1}^n {F_{2rk}^3 }$} and \mbox{$\sum_{k = 1}^n {L_{2rk}^3 }$}, where $r$~and~$n$ are integers. 
\end{abstract}

%\tableofcontents
% \listoffigures
%\chead{\thepage}
\section{Introduction}

The Fibonacci numbers, $F_n$, and Lucas numbers, $L_n$, are defined, for \mbox{$n\in\Z$}, as usual, through the recurrence relations \mbox{$F_n=F_{n-1}+F_{n-2}$}, \mbox{$F_0=0$, $F_1=1$} and \mbox{$L_n=L_{n-1}+L_{n-2}$}, $L_0=2$, $L_1=1$, with $F_{-n}=(-1)^{n-1}F_n$ and $L_{-n}=(-1)^nL_n$.

\bigskip

Clary and Hemenway~\cite{clary} derived the remarkable formulas
\begin{equation}\label{equ.mnyq31m}
4\sum_{k=1}^nF_{2k}^3=
\begin{cases}
F_n^2L_{n+1}^2F_{n-1}L_{n+2} & \text{if $n$ is even,}\\
L_n^2F_{n+1}^2L_{n-1}F_{n+2} & \text{if $n$ is odd\,,}
\end{cases}
\end{equation}
and
\begin{equation}\label{equ.gd1awb5}
8\sum_{k = 1}^n {F_{4k}^3 }  = F_{2n}^2 F_{2n + 2}^2 (L_{4n + 2}  + 6)\,.
\end{equation}
In this present paper we will derive the following corresponding Lucas counterparts of~\eqref{equ.mnyq31m} and~\eqref{equ.gd1awb5}:
\begin{equation}\label{equ.aa5tal0}
4\sum_{k=1}^nL_{2k}^3=
\begin{cases}
5F_nF_{n+1}(L_nL_{n+1}L_{2n+1}+16) & \text{if $n$ is even,}\\
L_nL_{n+1}(5F_nF_{n+1}L_{2n+1}+16) & \text{if $n$ is odd\,,}
\end{cases}
\end{equation}
and
\begin{equation}
8\sum_{k = 1}^n {L_{4k}^3 }  = F_{2n} L_{2n + 2} (5L_{2n}F_{2n + 2}F_{4n + 2}  + 32)\,.
\end{equation}
In fact we will derive the following more general results:
\begin{itemize}
\item If $r$ is odd, then
\begin{equation}\label{equ.ewu6n75}
L_{3r}\sum_{k=1}^nF_{2rk}^3=
\begin{cases}
F_{rn}^2L_{rn+r}^2(L_{rn}F_{rn+r}+F_r) & \text{if $n$ is even,}\\
L_{rn}^2F_{rn+r}^2(F_{rn}L_{rn+r}+F_r) & \text{if $n$ is odd\,,}
\end{cases}
\end{equation}
\begin{equation}\label{equ.bnbadtf}
L_{3r}\sum_{k=1}^nL_{2rk}^3=
\begin{cases}
5F_{rn}F_{rn+r}(L_{rn}L_{rn+r}L_{2rn+r}+4(L_{2r}+1)) & \text{if $n$ is even,}\\
L_{rn}L_{rn+r}(5F_{rn}F_{rn+r}L_{2rn+r}+4(L_{2r}+1)) & \text{if $n$ is odd\,.}
\end{cases}
\end{equation}
\item If $r$ is even, then
\begin{equation}\label{equ.px681ib}
F_{3r} \sum_{k = 1}^n {F_{2rk}^3 }  = F_{rn}^2 F_{rn + r}^2 (L_{rn} L_{rn + r}  + L_r )\,,
\end{equation}
\begin{equation}
F_{3r} \sum_{k = 1}^n {L_{2rk}^3 }  = F_{rn} L_{rn + r} (5L_{rn}F_{rn + r}F_{2rn + r}   + 4(L_{2r}  + 1))\,.
\end{equation}

\end{itemize}

As variations on identities~\eqref{equ.ewu6n75} and~\eqref{equ.px681ib} we will prove
\begin{itemize}
\item If $r$ is odd, then
\[
L_{3r}\sum_{k=1}^nF_{2rk}^3=
\begin{cases}
F_{rn}L_{rn+r}(L_{rn}F_{rn+r}F_{2rn+r}-2F_r^2) & \text{if $n$ is even,}\\
L_{rn}F_{rn+r}(F_{rn}L_{rn+r}F_{2rn+r}-2F_r^2) & \text{if $n$ is odd\,.}
\end{cases}
\]

\item If $r$ is even, then
\[
5F_{3r} \sum_{k = 1}^n {F_{2rk}^3 }  =  F_{rn} F_{rn + r}(L_{rn} L_{rn + r} L_{2rn + r}  - 2L_r^2)\,.
\]
\end{itemize}

\section{Required identities and preliminary results}

\subsection{Telescoping summation identity}
The following telescoping summation identity is a special case of more general identities proved in~\cite{adegoke}.
\begin{lemma}\label{finall}
If $f(k)$ is a real sequence and $m$, $q$ and $n$ are positive integers, then
\[
\sum_{k = 1}^n {\left[ {f(mk + mq) - f(mk)} \right]}  = \sum_{k = 1}^q {f(mk + mn)}  - \sum_{k = 1}^q {f(mk)}\,. 
\]

\end{lemma}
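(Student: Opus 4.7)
The plan is to reduce the identity to pure index-bookkeeping, since $m$ plays no essential role. Setting $g(k) = f(mk)$, the claimed equality becomes
\[
\sum_{k=1}^n \bigl[g(k+q) - g(k)\bigr] = \sum_{k=1}^q \bigl[g(k+n) - g(k)\bigr],
\]
which is visibly symmetric in $n$ and $q$. It therefore suffices to establish this reduced form, which is a mild generalization of ordinary telescoping (recovered when $q=1$).

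First I would split each side into a difference of two sums and then reindex the shifted pieces ($j = k+q$ on the left, $j = k+n$ on the right). This turns the left side into $\sum_{j=q+1}^{n+q} g(j) - \sum_{k=1}^n g(k)$ and the right side into $\sum_{j=n+1}^{n+q} g(j) - \sum_{k=1}^q g(k)$. Rearranging, the desired identity becomes the assertion
\[
\sum_{k=1}^q g(k) + \sum_{k=q+1}^{n+q} g(k) = \sum_{k=1}^n g(k) + \sum_{k=n+1}^{n+q} g(k),
\]
and both sides are simply $\sum_{k=1}^{n+q} g(k)$, so equality is immediate.

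I do not anticipate any genuine obstacle; the only subtlety is ensuring the argument works uniformly for all positive $n, q$ without a case split on whether $n \ge q$ or $n < q$. Casting the cancellation as two different partitions of the index set $\{1,\ldots,n+q\}$, as above, handles both regimes in one stroke. Once the reduced identity is proved, reverting $g(k) \to f(mk)$ yields the statement of the lemma.
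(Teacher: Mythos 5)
Your argument is correct. Note, however, that the paper does not actually prove Lemma~\ref{finall}: it simply cites it as a special case of more general identities established in the reference~\cite{adegoke}, so there is no in-paper proof to compare against. Your proof is a clean, self-contained alternative: the substitution $g(k)=f(mk)$ correctly strips out the parameter $m$ (since $mk+mq=m(k+q)$), the reindexings $j=k+q$ and $j=k+n$ are valid, and the final step --- recognizing both rearranged sides as two different splittings of $\sum_{k=1}^{n+q}g(k)$, namely at the cut points $q$ and $n$ --- is exactly the right device to avoid a case split on whether $n\ge q$. The observation that the reduced identity is symmetric in $n$ and $q$, and that $q=1$ recovers ordinary telescoping, is a nice sanity check. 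The only thing your write-up buys beyond the paper is self-containedness; if anything, it would be a welcome addition, since the cited general result is doing very little work here and a three-line direct proof suffices.
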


\subsection{First-power Fibonacci summation identities}
\begin{lemma}\label{thm.esyph89}
If $r$ and $n$ are integers, then
\begin{enumerate}
\item[(i)] If $r$ is even, then
\[
F_r\sum_{k=1}^nF_{2rk}=F_{rn}F_{rn+r}\,,
\] 
\item[(ii)] if $r$ is odd, then
\[
L_r\sum_{k=1}^nF_{2rk}=
\begin{cases}
F_{rn}L_{rn+r} & \text{if $n$ is even,}\\
L_{rn}F_{rn+r} & \text{if $n$ is odd.}
\end{cases}
\]
\end{enumerate}
\end{lemma}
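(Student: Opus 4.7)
The natural approach is to apply the telescoping Lemma~\ref{finall} with $m = 2r$ and $q = 1$, so that
\[
\sum_{k=1}^{n}\bigl[f(2r(k+1)) - f(2rk)\bigr] = f(2rn + 2r) - f(2r),
\]
and then to choose $f$ so that the summand on the left becomes a constant multiple of $F_{2rk}$. The constant will be the $L_r$ or $F_r$ appearing on the left-hand side of the stated identities.

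\textbf{Step 1: Choice of $f$.} The key ingredient is the pair of subtraction formulas
\[
F_{M+N} - (-1)^{N} F_{M-N} = L_M F_N, \qquad L_{M+N} - (-1)^{N} L_{M-N} = 5F_M F_N.
\]
Setting $M = 2rk$ and $N = r$, we see that for odd~$r$, the choice $f(j) = F_{j-r}$ gives
\[
f(2r(k+1)) - f(2rk) = F_{2rk+r} - F_{2rk-r} = L_{2rk}F_r,
\]
which is not yet what we want; but if instead we use the companion formula $F_{M+N} + (-1)^{N} F_{M-N} = F_M L_N$ we obtain, for odd $r$, $F_{2rk+r} + F_{2rk-r} = F_{2rk} L_r$. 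Equivalently, taking $f(j) = F_{j-r}$ and using $(-1)^r = -1$ throughout, the telescoped difference is $L_r F_{2rk}$, giving
\[
L_r \sum_{k=1}^{n} F_{2rk} = F_{2rn + r} - F_{r}.
\]
For even~$r$ the same parity reasoning shows that the choice $f(j) = L_{j-r}$ yields $f(2r(k+1)) - f(2rk) = 5 F_r F_{2rk}$, and thus
\[
5F_r \sum_{k=1}^{n} F_{2rk} = L_{2rn+r} - L_{r}.
\]

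\textbf{Step 2: Simplifying the right-hand side.} It remains to express the closed forms on the right in the factored shape claimed. Applying the same two identities with $M = rn+r$, $N = rn$, so that $M - N = r$ and $M + N = 2rn+r$, we get
\[
F_{2rn+r} - (-1)^{rn} F_r = L_{rn+r} F_{rn}, \qquad F_{2rn+r} + (-1)^{rn} F_r = F_{rn+r} L_{rn},
\]
\[
L_{2rn+r} - (-1)^{rn} L_r = 5 F_{rn+r} F_{rn}.
\]
For odd~$r$ we have $(-1)^{rn} = (-1)^n$, so selecting the appropriate formula according to the parity of $n$ converts $F_{2rn+r} - F_r$ into $F_{rn}L_{rn+r}$ (for $n$ even) or $L_{rn}F_{rn+r}$ (for $n$ odd), which is exactly part~(ii). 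For even~$r$, $(-1)^{rn} = 1$ regardless of $n$, so $L_{2rn+r} - L_r = 5F_{rn}F_{rn+r}$, and dividing both sides by $5$ gives part~(i).

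\textbf{Anticipated obstacle.} The argument is routine once the right $f$ is identified; the only subtle point is matching parities. The parity of $rn$ (which equals the parity of $n$ when $r$ is odd, and is always even when $r$ is even) is what separates the two cases in part~(ii) and explains why there is no such case split in part~(i). I would present the even-$r$ and odd-$r$ situations side by side to make the role of the sign $(-1)^{rn}$ transparent.
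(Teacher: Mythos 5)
Your proposal is correct and follows essentially the same route as the paper: telescoping via Lemma~\ref{finall} to reach the closed forms $L_r\sum F_{2rk}=F_{2rn+r}-F_r$ (odd $r$) and $5F_r\sum F_{2rk}=L_{2rn+r}-L_r$ (even $r$), then factoring with the addition formulas $F_{u+v}\pm(-1)^vF_{u-v}$ and $L_{u+v}-(-1)^vL_{u-v}=5F_uF_v$; the only difference is that you take $q=1$ with a half-step shift in $f$ (choosing $f$ according to the parity of $r$ up front), whereas the paper takes $q=2$ with $f(k)=L_{k-2r}$ and splits on the parity of $r$ afterwards. Note two sign slips in your Step~1 displays (for odd $r$ the identities give $F_{2rk+r}+F_{2rk-r}=F_rL_{2rk}$ and $F_{2rk+r}-F_{2rk-r}=L_rF_{2rk}$, i.e.\ the opposite of what you wrote), though you self-correct and the conclusion you carry forward is the right one.
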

\begin{proof}
Setting $v=2r$ and $u=2rk$ in the identity
\begin{equation}
L_{u + v} - (-1)^vL_{u-v}=5F_uF_v\label{equ.q79u3q4}
\end{equation}
gives
\begin{equation}\label{equ.jv8ztfh}
L_{2rk + 2r}  - L_{2rk - 2r}  = 5F_{2r} F_{2rk}\,. 
\end{equation}
Taking \mbox{$f(k)=L_{k-2r}$}, $q=2$ and $m=2r$ in Lemma~\ref{finall} and employing identity~\eqref{equ.jv8ztfh} we have
\begin{equation}\label{equ.w8cpa7u}
\begin{split}
5F_{2r} \sum_{k = 1}^n {F_{2rk} }  &= \sum_{k = 1}^2 {L_{2rk + 2rn - 2r} }  - \sum_{k = 1}^2 {L_{2rk - 2r} }\\
&= L_{2rn + 2r}  + L_{2rn}  - L_{2r}  - 2\,.
\end{split}
\end{equation}
If $r$ is even, then on account of the identity
\begin{equation}
L_{u + v} + (-1)^vL_{u-v}=L_uL_v \label{equ.ztsb3uk},\\ 
\end{equation}
we have
\[
L_{2rn+2r}+L_{2rn}=L_rL_{2rn+r},\quad L_{2r}+2=L_r^2\,,
\]
and since
\begin{equation}\label{equ.gv7zoe2}
F_{2u}=F_uL_u\,,
\end{equation}
identity~\eqref{equ.w8cpa7u} now becomes
\begin{equation}
\begin{split}
5F_{r} \sum_{k = 1}^n {F_{2rk} }  &= L_{2rn + r}  - L_r\\
&=5F_{rn}F_{rn+r}\,,\quad\mbox{by~\eqref{equ.q79u3q4}}\,,
\end{split}
\end{equation}
that is,
\[
F_{r} \sum_{k = 1}^n {F_{2rk} }=F_{rn}F_{rn+r},\quad\mbox{$r$ even}\,,
\]
and the first part of Lemma~\ref{thm.esyph89} is proved.

\bigskip

If $r$ is odd, then on account of the identities~\eqref{equ.q79u3q4} and~\eqref{equ.ztsb3uk}, we have
\[
L_{2rn+2r}+L_{2rn}=5F_rF_{2rn+r},\quad L_{2r}+2=5F_r^2\,,
\]
and identity~\eqref{equ.w8cpa7u} reduces to
\begin{equation}
\begin{split}
L_r\sum_{k=1}^nF_{2rk}&=F_{2rn+r}-F_r\\
&\quad=
\begin{cases}
F_{rn}L_{rn+r} & \text{if $n$ is even,}\\
L_{rn}F_{rn+r} & \text{if $n$ is odd,}
\end{cases}
\end{split}
\end{equation}
and the second part of Lemma~\ref{thm.esyph89} is proved.
In the last stage of the above derivation we made use of the identities
\begin{equation}\label{equ.yb05ue2}
F_{u + v} - (-1)^vF_{u-v}=F_vL_u 
\end{equation}
and
\begin{equation}\label{equ.jjkzwa0}
F_{u + v} + (-1)^vF_{u-v}=L_vF_u\,. 
\end{equation}
\end{proof}
\subsection{First-power Lucas summation identities}
\begin{lemma}\label{thm.hnf079l}
If $r$ and $n$ are integers, then
\begin{enumerate}
\item[(i)] If $r$ is even, then
\[
F_r\sum_{k=1}^nL_{2rk}=F_{rn}L_{rn+r}\,,
\] 
\item[(ii)] if $r$ is odd, then
\[
L_r\sum_{k=1}^nL_{2rk}=
\begin{cases}
5F_{rn}F_{rn+r} & \text{if $n$ is even,}\\
L_{rn}L_{rn+r} & \text{if $n$ is odd.}
\end{cases}
\]
\end{enumerate}
\end{lemma}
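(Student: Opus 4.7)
The plan is to mirror the proof of Lemma~\ref{thm.esyph89}, but replacing the telescoping identity used there (which produced $F_{2rk}$ as the summand) by one that produces $L_{2rk}$. The natural choice is identity~\eqref{equ.yb05ue2}, $F_{u+v}-(-1)^vF_{u-v}=F_vL_u$, with $v=2r$ and $u=2rk$; since $(-1)^{2r}=1$, this yields
\[
F_{2rk+2r}-F_{2rk-2r}=F_{2r}L_{2rk},
\]
which is the telescoping seed I need.

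\textbf{Main computation.} Applying Lemma~\ref{finall} with $f(k)=F_{k-2r}$, $m=2r$, $q=2$, the telescoping gives
\[
F_{2r}\sum_{k=1}^n L_{2rk}=F_{2rn+2r}+F_{2rn}-F_{2r}-F_0=F_{2rn+2r}+F_{2rn}-F_{2r}.
\]
From here I split on the parity of $r$, exactly as in the proof of Lemma~\ref{thm.esyph89}. If $r$ is even, I use~\eqref{equ.jjkzwa0} with $u=2rn+r$, $v=r$ to write $F_{2rn+2r}+F_{2rn}=L_rF_{2rn+r}$, then use~\eqref{equ.yb05ue2} with $u=rn+r$, $v=rn$ (both exponents even) to reduce $F_{2rn+r}-F_r$ to $F_{rn}L_{rn+r}$. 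Combined with $F_{2r}=F_rL_r$ from~\eqref{equ.gv7zoe2}, the factor $L_r$ cancels to leave $F_r\sum L_{2rk}=F_{rn}L_{rn+r}$, as required.

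If $r$ is odd, I instead apply~\eqref{equ.yb05ue2} with $u=2rn+r$, $v=r$ (where $(-1)^r=-1$) to obtain $F_{2rn+2r}+F_{2rn}=F_rL_{2rn+r}$, so the telescoped sum becomes $F_r(L_{2rn+r}-L_r)$. I then split further on the parity of $n$: for $n$ even (so $rn$ even) I apply~\eqref{equ.q79u3q4} with $u=rn+r$, $v=rn$ to get $L_{2rn+r}-L_r=5F_{rn}F_{rn+r}$; for $n$ odd (so $rn$ odd) I apply~\eqref{equ.ztsb3uk} with $u=rn+r$, $v=rn$, where the sign flips, giving $L_{2rn+r}-L_r=L_{rn}L_{rn+r}$. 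Dividing through by $F_{2r}=F_rL_r$ in each subcase produces exactly the two formulas in part (ii).

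\textbf{Expected obstacle.} No deep obstruction is expected; the entire derivation reduces to choosing the right $(u,v)$ pairs so that the parity factor $(-1)^v$ lands with the correct sign. The only real bookkeeping is to confirm that the $F_0$ term in the telescoping boundary drops out cleanly (it does, since $F_0=0$), and that in each branch the spurious factor $L_r$ (for $r$ even) or $F_r$ (for $r$ odd) cancels against the corresponding factor in $F_{2r}=F_rL_r$, leaving the clean prefactor $F_r$ or $L_r$ stated in the lemma.
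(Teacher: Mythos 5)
Your proposal is correct and follows essentially the same route as the paper: the same telescoping seed $F_{2rk+2r}-F_{2rk-2r}=F_{2r}L_{2rk}$ fed into Lemma~\ref{finall} with $f(k)=F_{k-2r}$, $m=2r$, $q=2$, followed by the same parity splits using~\eqref{equ.jjkzwa0}, \eqref{equ.yb05ue2}, \eqref{equ.q79u3q4} and~\eqref{equ.ztsb3uk}, with the factor of $F_r$ or $L_r$ cancelled via $F_{2r}=F_rL_r$. No gaps.
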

\begin{proof}
Setting $v=2r$ and $u=2rk$ in the identity~\eqref{equ.yb05ue2} gives
\begin{equation}\label{equ.gimtkiy}
F_{2rk + 2r}  - F_{2rk - 2r}  = F_{2r} L_{2rk}\,. 
\end{equation}
Taking \mbox{$f(k)=F_{k-2r}$}, $q=2$ and $m=2r$ in Lemma~\ref{finall} and employing identity~\eqref{equ.gimtkiy} we have
\begin{equation}\label{equ.jhbv3g7}
\begin{split}
F_{2r} \sum_{k = 1}^n {L_{2rk} }  &= \sum_{k = 1}^2 {F_{2rk + 2rn - 2r} }  - \sum_{k = 1}^2 {F_{2rk - 2r} }\\ 
&= F_{2rn+2r}+F_{2rn}-F_{2r}\,.
\end{split}
\end{equation}
If $r$ is even, then choosing $v=r$ and $u=2rn+r$ in identity~\eqref{equ.jjkzwa0} gives
\begin{equation}
F_{2rn+2r}+F_{2rn}=L_rF_{2rn+r}
\end{equation}
and, on account of identity~\eqref{equ.gv7zoe2}, the identity~\eqref{equ.jhbv3g7} reduces to
\begin{equation}
\begin{split}
F_r\sum_{k=1}^nL_{2rk}&=F_{2rn+r}-F_r\\
&=F_{rn+r+rn}-F_{rn+r-rn}\\
&=F_{rn}L_{rn+r}\,,\quad\mbox{by identity~\eqref{equ.yb05ue2},}
\end{split}
\end{equation}

and the first part of Lemma~\ref{thm.hnf079l} is proved.

\bigskip

If $r$ is odd, then choosing $v=r$ and $u=2rn+r$ in identity~\eqref{equ.yb05ue2} gives
\begin{equation}
F_{2rn+2r}+F_{2rn}=F_rL_{2rn+r}
\end{equation}
and, again on account of identity~\eqref{equ.gv7zoe2}, the identity~\eqref{equ.jhbv3g7} now reduces to
\begin{equation}
\begin{split}
L_r\sum_{k=1}^nL_{2rk}&=L_{2rn+r}-L_r\\
&=L_{rn+r+rn}-L_{rn+r-rn}\\
&=
\begin{cases}
5F_{rn}F_{rn+r} & \text{if $n$ is even,}\\
L_{rn}L_{rn+r} & \text{if $n$ is odd,}
\end{cases}
\end{split}
\end{equation}
where in the last step we used the identities~\eqref{equ.q79u3q4} and~\eqref{equ.ztsb3uk}.

\end{proof}
\subsection{Other identities}
\begin{lemma}\label{thm.w3ql3gi}
If $r$ and $n$ are integers, then
\[
\frac{F_{3rn}F_{3rn+3r}}{F_{rn}F_{rn+r}}=L_{rn}L_{rn+r}L_{2rn+r}+L_{2r}+(-1)^{r-1}.
\]
\end{lemma}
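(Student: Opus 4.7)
The plan is to reduce everything to a single polynomial identity in Lucas numbers via the factorisation $F_{3m}=F_m(L_{2m}+(-1)^m)$, and then manipulate both sides until they coincide.

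First I would establish the auxiliary identity $F_{3m}=F_m(L_{2m}+(-1)^m)$. This follows from \eqref{equ.jjkzwa0} with $u=2m$, $v=m$, which gives $F_{3m}+(-1)^m F_m=L_m F_{2m}=F_m L_m^2$ by \eqref{equ.gv7zoe2}, combined with the square-of-Lucas identity $L_m^2=L_{2m}+2(-1)^m$ (obtained from \eqref{equ.ztsb3uk} with $u=v=m$). Applying this factorisation to $m=rn$ and to $m=rn+r$, the left-hand side of the target identity becomes
\[
\frac{F_{3rn}F_{3rn+3r}}{F_{rn}F_{rn+r}}=(L_{2rn}+(-1)^{rn})(L_{2rn+2r}+(-1)^{rn+r}).
\]

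Next I would expand this product and group by powers of $(-1)^{rn}$:
\[
L_{2rn}L_{2rn+2r}+(-1)^{rn}\bigl[L_{2rn+2r}+(-1)^r L_{2rn}\bigr]+(-1)^r.
\]
Two applications of \eqref{equ.ztsb3uk} collapse the bracketed pieces: with $u=2rn+r$, $v=r$ one gets $L_{2rn+2r}+(-1)^r L_{2rn}=L_rL_{2rn+r}$, and with $u=2rn+r$, $v=2rn$ one gets $L_{2rn}L_{2rn+2r}=L_{4rn+2r}+L_{2r}$. Hence
\[
\frac{F_{3rn}F_{3rn+3r}}{F_{rn}F_{rn+r}}=L_{4rn+2r}+L_{2r}+(-1)^{rn}L_rL_{2rn+r}+(-1)^r.
\]

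For the right-hand side I would expand $L_{rn}L_{rn+r}L_{2rn+r}$ using the same identity \eqref{equ.ztsb3uk}: $L_{rn}L_{rn+r}=L_{2rn+r}+(-1)^{rn}L_r$, and then $L_{2rn+r}^2=L_{4rn+2r}+2(-1)^r$, yielding
\[
L_{rn}L_{rn+r}L_{2rn+r}=L_{4rn+2r}+2(-1)^r+(-1)^{rn}L_rL_{2rn+r}.
\]
Substituting this into the previous display gives
\[
\frac{F_{3rn}F_{3rn+3r}}{F_{rn}F_{rn+r}}=L_{rn}L_{rn+r}L_{2rn+r}+L_{2r}-(-1)^r=L_{rn}L_{rn+r}L_{2rn+r}+L_{2r}+(-1)^{r-1},
\]
which is precisely the claim.

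There is no genuine obstacle here; the work is entirely bookkeeping of sign factors. The only place care is needed is in not conflating $(-1)^{rn}$ with $(-1)^r$: they both appear but play different roles, and the final cancellation $2(-1)^r-(-1)^r=(-1)^r$ is what produces the $(-1)^{r-1}$ on the right-hand side.
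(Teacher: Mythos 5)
Your proposal is correct and takes essentially the same route as the paper: both reduce $F_{3m}/F_m$ to $5F_m^2+3(-1)^m=L_m^2-(-1)^m=L_{2m}+(-1)^m$ and then collapse the resulting product of two such factors using \eqref{equ.ztsb3uk}. The only differences are cosmetic --- you expand both sides to a common normal form involving $L_{4rn+2r}$ and $(-1)^{rn}L_rL_{2rn+r}$ rather than transforming the left side step by step into the right, and there is a harmless mislabeling where the application of \eqref{equ.ztsb3uk} yielding $L_{2rn}L_{2rn+2r}=L_{4rn+2r}+L_{2r}$ should take $u=2rn+2r$, $v=2rn$.
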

\begin{proof}
Using the identity (equation~(36) of~\cite{clary}, also (3.3) of~\cite{dresel})
\begin{equation}\label{equ.oyvdke0}
F_{3u}=5F_u^3+3(-1)^uF_u\,,
\end{equation}
we have
\begin{equation}\label{equ.gs4xocq}
\begin{split}
\frac{{F_{3rn} F_{3rn + 3r} }}{{F_{rn} F_{rn + r} }} &= (5F_{rn}^2  + 3( - 1)^{rn} )(5F_{rn + r}^2  + 3( - 1)^{rn + r} )\\
&= (L_{rn}^2  - ( - 1)^{rn} )(L_{rn + r}^2  - ( - 1)^{rn + r} )\\
&= L_{rn}^2 L_{rn + r}^2  - ( - 1)^{rn + r} L_{rn}^2  - ( - 1)^{rn} L_{rn + r}^2  + ( - 1)^r\,, 
\end{split}
\end{equation}
where we have also made use of the identity
\begin{equation}\label{equ.mpb5qh0}
5F_u^2-L_u^2=(-1)^{u-1}4\,.
\end{equation}
Now,
\[
\begin{split}
L_{rn}^2 L_{rn + r}^2&= L_{rn} L_{rn + r} (L_{rn} L_{rn + r} )\\ 
&= L_{rn} L_{rn + r} (L_{2rn + r}  + ( - 1)^{rn} L_r )\quad\mbox{by~\eqref{equ.ztsb3uk}}\\
&= L_{rn} L_{rn + r} L_{2rn + r}  + ( - 1)^{rn} L_{rn} L_{rn + r} L_r \,.
\end{split}
\]
Therefore
\[
\begin{split}
\frac{{F_{3rn} F_{3rn + 3r} }}{{F_{rn} F_{rn + r} }} &= L_{rn} L_{rn + r} L_{2rn + r}\\ 
&\quad + ( - 1)^{rn} L_{rn + r} (L_{rn}L_r  - L_{rn + r} )\\
&\qquad - ( - 1)^{rn + r} L_{rn}^2  + ( - 1)^r\,. 
\end{split}
\]
But
\[
\begin{split}
&( - 1)^{rn} L_{rn + r} (L_{rn} L_r  - L_{rn + r} )\\
 &= ( - 1)^{rn} L_{rn + r} (L_{rn + r}  + ( - 1)^r L_{rn - r}  - L_{rn + r} ),\quad\mbox{by~\eqref{equ.ztsb3uk}}\\
& = ( - 1)^{rn + r} L_{rn + r} L_{rn - r}\\ 
&= ( - 1)^{rn + r} (L_{2rn}  + ( - 1)^{rn - r} L_{2r} ),\quad\mbox{again by~\eqref{equ.ztsb3uk}}\\
&= ( - 1)^{rn + r} L_{2rn}  + L_{2r}\,. 
\end{split}
\]
Thus
\[
\begin{split}
\frac{{F_{3rn} F_{3rn + 3r} }}{{F_{rn} F_{rn + r} }} &= L_{rn} L_{rn + r} L_{2rn + r}\\ 
&\quad + ( - 1)^{rn + r} L_{2rn}  + L_{2r}\\ 
&\qquad - ( - 1)^{rn + r} L_{rn}^2  + ( - 1)^r 
\end{split}
\]
\[
\begin{split}
&\qquad= L_{rn} L_{rn + r} L_{2rn + r}\\ 
&\qquad\quad + ( - 1)^{rn + r} (L_{2rn}  - L_{rn}^2 )\\
&\qquad\qquad + L_{2r}  + ( - 1)^r\,. 
\end{split}
\]
Finally, using the identity
\begin{equation}\label{equ.fpjkos2}
L_{2u}=L_u^2+(-1)^{u-1}2\,,
\end{equation}
obtained by setting $v=u$ in identity~\eqref{equ.ztsb3uk}, we have the statement of the Lemma.
\end{proof}
\begin{lemma}\label{thm.cdcw5y2}
If $r$ and $n$ are integers, then
\[
\frac{L_{3rn}L_{3rn+3r}}{L_{rn}L_{rn+r}}=5F_{rn}F_{rn+r}L_{2rn+r}+L_{2r}+(-1)^{r-1}.
\]
\end{lemma}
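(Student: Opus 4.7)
The plan is to follow the proof of Lemma~\ref{thm.w3ql3gi} almost verbatim, using the Lucas triplication identity in place of \eqref{equ.oyvdke0}. The well-known identity $L_{3u}=L_u^3-3(-1)^uL_u$ together with \eqref{equ.mpb5qh0} gives $L_{3u}/L_u=L_u^2-3(-1)^u$, so
\begin{equation*}
\frac{L_{3rn}L_{3rn+3r}}{L_{rn}L_{rn+r}} = \bigl(L_{rn}^2-3(-1)^{rn}\bigr)\bigl(L_{rn+r}^2-3(-1)^{rn+r}\bigr).
\end{equation*}

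Writing $L_u^2-3(-1)^u=(L_u^2-(-1)^u)-2(-1)^u$ and multiplying out, the right-hand side becomes the product $(L_{rn}^2-(-1)^{rn})(L_{rn+r}^2-(-1)^{rn+r})$ already evaluated in \eqref{equ.gs4xocq}, minus $2(-1)^{rn+r}L_{rn}^2+2(-1)^{rn}L_{rn+r}^2$, plus $8(-1)^r$. By Lemma~\ref{thm.w3ql3gi} the first piece equals $L_{rn}L_{rn+r}L_{2rn+r}+L_{2r}+(-1)^{r-1}$. To simplify the remaining correction I would substitute $L_u^2=L_{2u}+2(-1)^u$ (from \eqref{equ.fpjkos2}); the four constant pieces cancel and what remains is $-2(-1)^{rn}\bigl[L_{2rn+2r}+(-1)^rL_{2rn}\bigr]$, and one application of \eqref{equ.ztsb3uk} with $u=2rn+r$, $v=r$ collapses the bracket to $L_rL_{2rn+r}$.

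Putting the pieces together and factoring out $L_{2rn+r}$ yields
\begin{equation*}
\frac{L_{3rn}L_{3rn+3r}}{L_{rn}L_{rn+r}} = \bigl(L_{rn}L_{rn+r}-2(-1)^{rn}L_r\bigr)L_{2rn+r}+L_{2r}+(-1)^{r-1},
\end{equation*}
so it only remains to identify the coefficient of $L_{2rn+r}$. Identity~\eqref{equ.ztsb3uk} gives $L_{rn}L_{rn+r}=L_{2rn+r}+(-1)^{rn}L_r$ (using $L_{-r}=(-1)^rL_r$), and \eqref{equ.q79u3q4} gives $5F_{rn}F_{rn+r}=L_{2rn+r}-(-1)^{rn}L_r$; subtracting, $L_{rn}L_{rn+r}-2(-1)^{rn}L_r=5F_{rn}F_{rn+r}$, completing the proof. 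The one place where care is needed is the sign bookkeeping — keeping the parities $(-1)^{rn}$, $(-1)^{rn+r}$ and $(-1)^r$ consistent, especially each time $L_{-r}$ is introduced via \eqref{equ.q79u3q4} or \eqref{equ.ztsb3uk} — but the remainder of the argument is purely mechanical.
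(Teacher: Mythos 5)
Your proof is correct, and it takes a slightly different route from the paper's. The paper converts each factor via \eqref{equ.mpb5qh0} to get $L_u^2-3(-1)^u=5F_u^2+(-1)^u$, expands $(5F_{rn}^2+(-1)^{rn})(5F_{rn+r}^2+(-1)^{rn+r})$, and then redoes the whole reduction in parallel with the proof of Lemma~\ref{thm.w3ql3gi}, now using \eqref{equ.q79u3q4}, \eqref{equ.jjkzwa0} and \eqref{equ.gr4yegj} in place of their Lucas analogues. You instead write $L_u^2-3(-1)^u=(L_u^2-(-1)^u)-2(-1)^u$ so that the product splits into the factor $(L_{rn}^2-(-1)^{rn})(L_{rn+r}^2-(-1)^{rn+r})$ already evaluated in Lemma~\ref{thm.w3ql3gi} plus a correction, which you collapse with \eqref{equ.fpjkos2} and \eqref{equ.ztsb3uk} to $-2(-1)^{rn}L_rL_{2rn+r}$; the final step $L_{rn}L_{rn+r}-2(-1)^{rn}L_r=5F_{rn}F_{rn+r}$ is exactly the difference of \eqref{equ.ztsb3uk} and \eqref{equ.q79u3q4}. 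Your version buys genuine reuse of the earlier lemma rather than a second parallel computation, at the cost of slightly fussier sign bookkeeping in the correction term; I checked the signs ($-2(-1)^{rn+r}L_{rn}^2-2(-1)^{rn}L_{rn+r}^2+8(-1)^r$, with the constants cancelling after \eqref{equ.fpjkos2}) and they are right. Either argument is complete and elementary.
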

\begin{proof}
Using the following identity, (equation (1.6) of~\cite{dresel})
\begin{equation}\label{equ.aiunttr}
L_{3u}=L_u^3-3(-1)^uL_u\,,
\end{equation}
we have
\[
\begin{split}
\frac{{L_{3rn} L_{3rn + 3r} }}{{L_{rn} L_{rn + r} }} &= (L_{rn}^2  - 3( - 1)^{rn} )(L_{rn + r}^2  - 3( - 1)^{rn + r} )\\
&= (5F_{rn}^2  + ( - 1)^{rn} )(5F_{rn + r}^2  + ( - 1)^{rn + r} ),\quad\mbox{by~\eqref{equ.mpb5qh0}}\\
&= 25F_{rn}^2 F_{rn + r}^2  + ( - 1)^{rn + r} 5F_{rn}^2  + ( - 1)^{rn} 5F_{rn + r}^2  + ( - 1)^r\,, 
\end{split}
\]
and the rest of the calculation then proceeds as in the proof of Lemma~\ref{thm.w3ql3gi}, the basic required identities now being~\eqref{equ.q79u3q4}, \eqref{equ.jjkzwa0} and the identity
\begin{equation}\label{equ.gr4yegj}
L_{2u}=5F_u^2+(-1)^u2,
\end{equation}
obtained by setting $v=u$ in identity~\eqref{equ.q79u3q4}.
\end{proof}
\begin{lemma}\label{thm.ip29mgz}
If $r$ and $n$ are integers, then
\[
\frac{L_{3rn}F_{3rn+3r}}{L_{rn}F_{rn+r}}=5F_{rn}L_{rn+r}F_{2rn+r}+L_{2r}+(-1)^r.
\]
\end{lemma}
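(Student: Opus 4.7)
The plan is to proceed in the same style as Lemmas~\ref{thm.w3ql3gi} and~\ref{thm.cdcw5y2}: apply the cubic triplication formula~\eqref{equ.aiunttr} to $L_{3rn}$ and the cubic triplication formula~\eqref{equ.oyvdke0} to $F_{3rn+3r}$, then normalize the resulting squares of $L_{rn}$ and $F_{rn+r}$ using identity~\eqref{equ.mpb5qh0}, so that the ratio on the left of the statement becomes a product in which the "Fibonacci-squared" and "Lucas-squared" factors are on complementary sides. Concretely, I would write
\[
\frac{L_{3rn}}{L_{rn}}=L_{rn}^2-3(-1)^{rn}=5F_{rn}^2+(-1)^{rn}
\]
and, using \eqref{equ.mpb5qh0} the other way round,
\[
\frac{F_{3rn+3r}}{F_{rn+r}}=5F_{rn+r}^2+3(-1)^{rn+r}=L_{rn+r}^2-(-1)^{rn+r}.
\]
Multiplying these and simplifying the $(-1)^{rn}(-1)^{rn+r}$ cross term to $(-1)^r$ yields
\[
\frac{L_{3rn}F_{3rn+3r}}{L_{rn}F_{rn+r}}=5F_{rn}^2 L_{rn+r}^2+(-1)^{rn}L_{rn+r}^2-5(-1)^{rn+r}F_{rn}^2-(-1)^r.
\]

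The next step is to split off the desired main term $5F_{rn}L_{rn+r}F_{2rn+r}$. For this I would use identity~\eqref{equ.yb05ue2} in the form $F_{rn}L_{rn+r}=F_{2rn+r}-(-1)^{rn}F_r$ (take $u=rn+r$, $v=rn$), so that
\[
5F_{rn}^2 L_{rn+r}^2=5F_{rn}L_{rn+r}F_{2rn+r}-5(-1)^{rn}F_r\,F_{rn}L_{rn+r}.
\]
Then the identity reduces to proving that the residual
\[
-5(-1)^{rn}F_r F_{rn}L_{rn+r}+(-1)^{rn}L_{rn+r}^2-5(-1)^{rn+r}F_{rn}^2
\]
equals $L_{2r}+2(-1)^r$. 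To handle this, I would rewrite $L_{rn+r}-5F_r F_{rn}=(-1)^r L_{rn-r}$, which is exactly identity~\eqref{equ.q79u3q4} with $u=rn$, $v=r$. Pulling out $(-1)^{rn+r}$ reduces the residual to $(-1)^{rn+r}(L_{rn+r}L_{rn-r}-5F_{rn}^2)$.

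The main (only) calculational obstacle is the final collapse of $L_{rn+r}L_{rn-r}-5F_{rn}^2$, and here the sign tracking with $(-1)^{rn}$ and $(-1)^r$ must be done carefully, exactly as in the derivation of Lemma~\ref{thm.w3ql3gi}. Applying the product formula $L_a L_b = L_{a+b}+(-1)^b L_{a-b}$ (a rephrasing of \eqref{equ.ztsb3uk}) with $a=rn+r$, $b=rn-r$ gives $L_{rn+r}L_{rn-r}=L_{2rn}+(-1)^{rn+r}L_{2r}$, and then \eqref{equ.gr4yegj} yields $L_{2rn}-5F_{rn}^2=2(-1)^{rn}$. Multiplying through by $(-1)^{rn+r}$ turns the two summands into $2(-1)^r$ and $L_{2r}$ respectively, giving the required $L_{2r}+2(-1)^r$. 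Combining with the $-(-1)^r$ already present produces $L_{2r}+(-1)^r$, completing the proof.
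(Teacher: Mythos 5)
Your proof is correct, and it follows essentially the same method the paper uses for the sibling Lemmas~\ref{thm.w3ql3gi} and~\ref{thm.cdcw5y2} (the paper itself states Lemma~\ref{thm.ip29mgz} without proof, leaving it as the analogous computation): apply the triplication formulas~\eqref{equ.oyvdke0} and~\eqref{equ.aiunttr}, convert with~\eqref{equ.mpb5qh0}, and collapse the cross terms with the product-to-sum identities. All sign computations check out.
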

\begin{lemma}\label{thm.c1bi001}
If $r$ and $n$ are integers, then
\[
\frac{F_{3rn}L_{3rn+3r}}{F_{rn}L_{rn+r}}=5L_{rn}F_{rn+r}F_{2rn+r}+L_{2r}+(-1)^r.
\]
\end{lemma}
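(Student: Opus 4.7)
The plan is to mirror the derivations of Lemmas \ref{thm.w3ql3gi}, \ref{thm.cdcw5y2}, and \ref{thm.ip29mgz}. First I would combine the triple-angle formulas \eqref{equ.oyvdke0} and \eqref{equ.aiunttr} with \eqref{equ.mpb5qh0} to obtain the compact forms $F_{3u}/F_u=L_u^2-(-1)^u$ and $L_{3u}/L_u=5F_u^2+(-1)^u$. Specialising the former at $u=rn$ and the latter at $u=rn+r$, the left-hand side becomes
\[
\frac{F_{3rn}L_{3rn+3r}}{F_{rn}L_{rn+r}}=\bigl(L_{rn}^2-(-1)^{rn}\bigr)\bigl(5F_{rn+r}^2+(-1)^{rn+r}\bigr),
\]
whose expansion yields the principal term $5L_{rn}^2 F_{rn+r}^2$ together with three cross terms carrying signs $(-1)^{rn+r}$, $(-1)^{rn}$, and $(-1)^r$.

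To extract $F_{2rn+r}$ from the principal term I would invoke the specialisation $L_{rn}F_{rn+r}=F_{2rn+r}+(-1)^{rn}F_r$ of \eqref{equ.jjkzwa0}, which turns $5L_{rn}^2 F_{rn+r}^2$ into the desired summand $5L_{rn}F_{rn+r}F_{2rn+r}$ plus a residue $5(-1)^{rn}L_{rn}F_{rn+r}F_r$. This residue should then be broken up via $L_{rn}F_r=F_{rn+r}-(-1)^r F_{rn-r}$ (from \eqref{equ.yb05ue2}) followed by $5F_{rn+r}F_{rn-r}=L_{2rn}-(-1)^{rn-r}L_{2r}$ (from \eqref{equ.q79u3q4}). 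The expectation, suggested by the sibling lemmas, is that the newly produced $5(-1)^{rn}F_{rn+r}^2$ will exactly cancel one of the cross terms from the initial expansion while a clean $L_{2r}$ drops out. What remains beyond the main term and $L_{2r}$ should reduce to $(-1)^{rn+r}\bigl(L_{rn}^2-L_{2rn}\bigr)-(-1)^r$, which collapses by \eqref{equ.fpjkos2} in the form $L_{rn}^2-L_{2rn}=2(-1)^{rn}$ to $2(-1)^r-(-1)^r=(-1)^r$, completing the stated identity.

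The main obstacle is pure sign bookkeeping rather than any new algebraic ingredient: four different parities $(-1)^{rn}$, $(-1)^{rn+r}$, $(-1)^{rn-r}$, and $(-1)^r$ circulate through the computation, and the argument only succeeds because two pieces of the reshaped residue annihilate one of the initial cross terms while the remaining cross term merges cleanly with the $L_{2u}$ collapse. Once the signs are tracked carefully, the architecture of the proof is identical to that of Lemma \ref{thm.w3ql3gi}, with the roles of $F$ and $L$ interchanged at the index $rn+r$.
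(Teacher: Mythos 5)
Your proposal is correct and follows exactly the architecture the paper intends for this lemma: the paper states it without an explicit proof, relying on the pattern of Lemmas~\ref{thm.w3ql3gi} and~\ref{thm.cdcw5y2} (triple-angle formulas reduced via~\eqref{equ.mpb5qh0}, expansion, extraction of the main term by a product-to-sum identity, and collapse of the residue through~\eqref{equ.yb05ue2}, \eqref{equ.q79u3q4} and~\eqref{equ.fpjkos2}), and your computation reproduces precisely that chain with the roles of $F$ and $L$ swapped at the index $rn+r$. I verified the sign bookkeeping; the $5(-1)^{rn}F_{rn+r}^2$ cancellation and the final reduction $2(-1)^r-(-1)^r=(-1)^r$ both check out.
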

Different but equivalent versions of Lemmata~\ref{thm.w3ql3gi}---\ref{thm.c1bi001} are given below:
\begin{lemma}\label{thm.c6tpo1r}
If $r$ and $n$ are integers, then
\[
\frac{{F_{3rn} F_{3rn + 3r} }}{{F_{rn} F_{rn + r} }} = L_{2rn + r}^2  + ( - 1)^{nr} L_{rn + r}^2  + ( - 1)^{(n - 1)r} L_{rn}^2  + L_r^2  + ( - 1)^{r - 1} 7\,.
\]
\end{lemma}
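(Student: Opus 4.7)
The approach is to derive Lemma~\ref{thm.c6tpo1r} directly from Lemma~\ref{thm.w3ql3gi} by rewriting the triple product $L_{rn}L_{rn+r}L_{2rn+r}$ on its right-hand side as a sum of three squares plus a constant. Since Lemma~\ref{thm.w3ql3gi} is already at our disposal, no telescoping argument is needed; only the Lucas product-to-sum identity~\eqref{equ.ztsb3uk} and the doubling formula~\eqref{equ.fpjkos2} are required.

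First, I apply~\eqref{equ.ztsb3uk} with $u = rn+r$, $v = rn$ to get $L_{rn}L_{rn+r} = L_{2rn+r} + (-1)^{rn}L_r$, whence
\[
L_{rn}L_{rn+r}L_{2rn+r} = L_{2rn+r}^2 + (-1)^{rn} L_r L_{2rn+r}.
\]
Then I apply~\eqref{equ.ztsb3uk} again with $u = 2rn+r$, $v = r$ to obtain $L_r L_{2rn+r} = L_{2rn+2r} + (-1)^r L_{2rn}$, and use the doubling formula~\eqref{equ.fpjkos2} to expand
\[
L_{2rn+2r} = L_{rn+r}^2 + 2(-1)^{rn+r-1}, \qquad L_{2rn} = L_{rn}^2 + 2(-1)^{rn-1}.
\]
Substituting and simplifying the parities (noting $(-1)^{2rn}=1$ and $(-1)^{rn+r}=(-1)^{(n-1)r}$) converts $L_{rn}L_{rn+r}L_{2rn+r}$ into
\[
L_{2rn+r}^2 + (-1)^{nr} L_{rn+r}^2 + (-1)^{(n-1)r} L_{rn}^2 + 4(-1)^{r-1}.
\]
Finally, I apply~\eqref{equ.fpjkos2} once more in the form $L_{2r} = L_r^2 + 2(-1)^{r-1}$ to rewrite $L_{2r}+(-1)^{r-1} = L_r^2 + 3(-1)^{r-1}$, and adding this to the previous display produces the $L_r^2 + 7(-1)^{r-1}$ demanded by the statement.

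The main obstacle, such as it is, lies entirely in bookkeeping the sign factors $(-1)^{rn}$, $(-1)^{rn+r}$, and $(-1)^{r-1}$: in particular one must verify that the two contributions $2(-1)^{r-1}$ coming from the two applications of the doubling formula add rather than cancel, so that together with the $(-1)^{r-1}$ already present in Lemma~\ref{thm.w3ql3gi} and the $2(-1)^{r-1}$ produced by expanding $L_{2r}$ one obtains exactly the coefficient $7$.
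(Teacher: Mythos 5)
Your proof is correct and takes essentially the same route as the paper's: both arguments rest on identity~\eqref{equ.ztsb3uk} and the doubling formula~\eqref{equ.fpjkos2}, with the same sign bookkeeping yielding $4(-1)^{r-1}$ from the two doublings and $3(-1)^{r-1}$ from $L_{2r}+(-1)^{r-1}$. The only difference is the starting point: you expand the triple product $L_{rn}L_{rn+r}L_{2rn+r}$ in the statement of Lemma~\ref{thm.w3ql3gi}, whereas the paper expands $L_{rn}^2L_{rn+r}^2=(L_{2rn+r}+(-1)^{rn}L_r)^2$ and substitutes back into equation~\eqref{equ.gs4xocq} from that lemma's proof---the manipulations coincide after one step.
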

\begin{proof}
The proof is similar to that of Lemma~\ref{thm.w3ql3gi}, but here we use
\[
\begin{split}
L_{rn}^2 L_{rn + r}^2  &= (L_{2rn + r}  + ( - 1)^{rn} L_r )^2\\ 
 &= L_{2rn + r}^2  + L_r^2  + 2( - 1)^{rn} \{L_r L_{2rn + r}\}\\ 
 &= L_{2rn + r}^2  + L_r^2  + 2( - 1)^{rn} \{L_{2rn + 2r}  + ( - 1)^r L_{2rn} \}\\
 &= L_{2rn + r}^2  + L_r^2  + 2( - 1)^{rn} \{ L_{rn + r}^2  + ( - 1)^{rn + r - 1} 2\\
 &\qquad+ ( - 1)^r [L_{rn}^2  + ( - 1)^{rn - 1} 2]\} 
\end{split}
\]
and substitute in~\eqref{equ.gs4xocq}.
\end{proof}
\begin{lemma}\label{thm.da8g4ih}
If $r$ and $n$ are integers, then
\[
\frac{{L_{3rn} L_{3rn + 3r} }}{{L_{rn} L_{rn + r} }} = L_{2rn + r}^2  + ( - 1)^{nr-1} L_{rn + r}^2  - ( - 1)^{(n - 1)r} L_{rn}^2  + L_r^2  + ( - 1)^r\,.
\]
\end{lemma}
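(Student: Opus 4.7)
The proof of Lemma~\ref{thm.da8g4ih} runs exactly parallel to the proof of Lemma~\ref{thm.c6tpo1r} but starts from the Lucas triplication identity~\eqref{equ.aiunttr} rather than from~\eqref{equ.oyvdke0}. The plan is to expand $L_{3rn}L_{3rn+3r}/(L_{rn}L_{rn+r})$ into the product $(L_{rn}^2 - 3(-1)^{rn})(L_{rn+r}^2 - 3(-1)^{rn+r})$, then replace the leading term $L_{rn}^2 L_{rn+r}^2$ by a combination of squares of Lucas numbers using the $L_u L_v$-identity~\eqref{equ.ztsb3uk} and the duplication formula~\eqref{equ.fpjkos2}, and finally collect the various $(-1)$-signed contributions.

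Concretely, first I would write
\[
\frac{L_{3rn}L_{3rn+3r}}{L_{rn}L_{rn+r}} = L_{rn}^2 L_{rn+r}^2 - 3(-1)^{rn+r}L_{rn}^2 - 3(-1)^{rn}L_{rn+r}^2 + 9(-1)^r
\]
by straight multiplication. Next, reusing the key manipulation from the proof of Lemma~\ref{thm.c6tpo1r}, I would expand
\[
L_{rn}^2 L_{rn+r}^2 = \bigl(L_{2rn+r} + (-1)^{rn}L_r\bigr)^2 = L_{2rn+r}^2 + L_r^2 + 2(-1)^{rn} L_r L_{2rn+r},
\]
then apply~\eqref{equ.ztsb3uk} to get $L_r L_{2rn+r} = L_{2rn+2r} + (-1)^r L_{2rn}$, and finally use~\eqref{equ.fpjkos2} on $L_{2rn+2r}$ and $L_{2rn}$ to obtain
\[
L_{rn}^2 L_{rn+r}^2 = L_{2rn+r}^2 + L_r^2 + 2(-1)^{rn} L_{rn+r}^2 + 2(-1)^{rn+r} L_{rn}^2 + 8(-1)^{r-1}.
\]

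Substituting this back into the expansion produces coefficients $2(-1)^{rn} - 3(-1)^{rn} = -(-1)^{rn} = (-1)^{rn-1}$ in front of $L_{rn+r}^2$ and, symmetrically, $2(-1)^{rn+r} - 3(-1)^{rn+r} = -(-1)^{rn+r} = -(-1)^{(n-1)r}$ (since $(-1)^{-r}=(-1)^r$) in front of $L_{rn}^2$. The scalar contribution reduces to $8(-1)^{r-1} + 9(-1)^r = (-1)^r$, which together with the surviving $L_{2rn+r}^2$ and $L_r^2$ yields exactly the stated right-hand side.

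The only real obstacle is the careful sign accounting: matching $(-1)^{rn+r-1}$ with $-(-1)^{(n-1)r}$ via $(-1)^{-r}=(-1)^r$, and consolidating the four constant $(-1)$-terms $8(-1)^{r-1}+9(-1)^r$ into the single $(-1)^r$ of the statement. Once these are handled, the identity falls out directly and no further Fibonacci/Lucas identities beyond \eqref{equ.ztsb3uk} and \eqref{equ.fpjkos2} are required.
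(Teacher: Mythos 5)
Your proposal is correct: the sign bookkeeping all checks out (in particular $2(-1)^{rn}L_rL_{2rn+r}$ does contribute $2(-1)^{rn}L_{rn+r}^2+2(-1)^{rn+r}L_{rn}^2+8(-1)^{r-1}$, and $8(-1)^{r-1}+9(-1)^r=(-1)^r$). This is exactly the route the paper intends for this lemma, which it leaves unproved as being analogous to Lemma~\ref{thm.c6tpo1r}: start from the expansion in the proof of Lemma~\ref{thm.cdcw5y2} via~\eqref{equ.aiunttr} and substitute the same decomposition of $L_{rn}^2L_{rn+r}^2$ using~\eqref{equ.ztsb3uk} and~\eqref{equ.fpjkos2}.
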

\begin{lemma}\label{thm.kquo1a5}
If $r$ and $n$ are integers, then
\[
\frac{{L_{3rn} F_{3rn + 3r} }}{{L_{rn} F_{rn + r} }} = 5F_{2rn + r}^2  + ( - 1)^{nr - 1} 5F_{rn + r}^2  + ( - 1)^{(n - 1)r} 5F_{rn}^2  + 5F_r^2  + ( - 1)^r 3\,.
\]

\end{lemma}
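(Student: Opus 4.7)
The plan is to derive this identity from Lemma \ref{thm.ip29mgz}, which asserts
\[
\frac{L_{3rn}F_{3rn+3r}}{L_{rn}F_{rn+r}} = 5F_{rn}L_{rn+r}F_{2rn+r} + L_{2r} + (-1)^r.
\]
The task thus reduces to rewriting the single product $5F_{rn}L_{rn+r}F_{2rn+r}$ as the symmetric sum of four square terms that appears in the claim. The strategy is analogous to the way Lemma \ref{thm.c6tpo1r} is obtained from Lemma \ref{thm.w3ql3gi}: re-expand one of the Fibonacci/Lucas factors via a product-to-sum identity and reassemble, this time in the $F^{2}$ pattern rather than the $L^{2}$ pattern.

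First, I would apply identity \eqref{equ.yb05ue2} with $u=rn+r$ and $v=rn$ to obtain $F_{rn}L_{rn+r}=F_{2rn+r}-(-1)^{rn}F_{r}$. Multiplying by $5F_{2rn+r}$ gives
\[
5F_{rn}L_{rn+r}F_{2rn+r} \;=\; 5F_{2rn+r}^{2} \;-\; 5(-1)^{rn}F_{r}F_{2rn+r}.
\]
The cross term is then handled by identity \eqref{equ.q79u3q4} with $u=2rn+r$, $v=r$, which yields $5F_{r}F_{2rn+r}=L_{2rn+2r}-(-1)^{r}L_{2rn}$, so the cross term takes the form $-(-1)^{rn}L_{2rn+2r}+(-1)^{rn+r}L_{2rn}$.

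Next I would invoke the doubling identity \eqref{equ.gr4yegj} to replace $L_{2rn+2r}=5F_{rn+r}^{2}+2(-1)^{rn+r}$, $L_{2rn}=5F_{rn}^{2}+2(-1)^{rn}$, and also $L_{2r}=5F_{r}^{2}+2(-1)^{r}$ for the constant piece. The two stray constants $\mp 2(-1)^{2rn+r}=\mp 2(-1)^{r}$ generated by the $L_{2rn+2r}$ and $L_{2rn}$ expansions cancel against one another, and combining the remaining scalar with the trailing $L_{2r}+(-1)^{r}$ from Lemma \ref{thm.ip29mgz} leaves exactly $5F_{r}^{2}+3(-1)^{r}$. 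The coefficients of the two non-trivial square terms drop out as $-5(-1)^{rn}$ in front of $F_{rn+r}^{2}$ and $+5(-1)^{rn+r}$ in front of $F_{rn}^{2}$. Reading $-(-1)^{rn}=(-1)^{nr-1}$ and, since $(-1)^{2r}=1$, $(-1)^{rn+r}=(-1)^{(n-1)r}$, one recovers the stated form.

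The only real obstacle is the sign bookkeeping across the three substitution stages; the key structural observation that makes everything collapse cleanly is the cancellation of the two extraneous $(-1)^{r}$ constants produced by the separate $L_{2u}$ expansions, so that the only constant left standing is $5F_{r}^{2}+3(-1)^{r}$.
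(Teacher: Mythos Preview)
Your argument is correct: the chain \eqref{equ.yb05ue2} $\to$ \eqref{equ.q79u3q4} $\to$ \eqref{equ.gr4yegj} does convert $5F_{rn}L_{rn+r}F_{2rn+r}$ into $5F_{2rn+r}^{2}+(-1)^{nr-1}5F_{rn+r}^{2}+(-1)^{(n-1)r}5F_{rn}^{2}$ with the two stray $\pm 2(-1)^{r}$ terms cancelling, and $L_{2r}+(-1)^{r}=5F_{r}^{2}+3(-1)^{r}$ supplies the constant.

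The route differs from the paper's template, though. In the paper the ``alternate'' lemma (Lemma~\ref{thm.c6tpo1r}) is \emph{not} deduced from the finished Lemma~\ref{thm.w3ql3gi}; rather the proof returns to the intermediate expansion~\eqref{equ.gs4xocq} and re-expands the quartic term via $(L_{2rn+r}+(-1)^{rn}L_{r})^{2}$. The intended proof of Lemma~\ref{thm.kquo1a5} would analogously redo the derivation of Lemma~\ref{thm.ip29mgz} from the cubic identities \eqref{equ.oyvdke0}, \eqref{equ.aiunttr}, writing the resulting $25F_{rn}^{2}F_{rn+r}^{2}$ (or its mixed analogue) as a squared sum. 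Your approach is shorter because it takes Lemma~\ref{thm.ip29mgz} as a black box and only has to unwind a single product of three factors rather than rebuild the whole expression; the paper's approach, on the other hand, makes the parallel with Lemma~\ref{thm.c6tpo1r} literal and does not depend on Lemma~\ref{thm.ip29mgz} having been established first.
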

\begin{lemma}\label{thm.powhazn}
If $r$ and $n$ are integers, then
\[
\frac{{F_{3rn} L_{3rn + 3r} }}{{F_{rn} L_{rn + r} }} = 5F_{2rn + r}^2  + ( - 1)^{nr} 5F_{rn + r}^2  - ( - 1)^{(n - 1)r} 5F_{rn}^2  + 5F_r^2  + ( - 1)^r 3\,.
\]
\end{lemma}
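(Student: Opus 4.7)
The plan is to mirror the derivations in Lemmata~\ref{thm.c6tpo1r} and~\ref{thm.kquo1a5}, since the present statement is the ``$F$--$L$ mixed'' analogue of the same calculation. Using the cubing identities~\eqref{equ.oyvdke0} and~\eqref{equ.aiunttr} one factors
\[
\frac{F_{3rn}L_{3rn+3r}}{F_{rn}L_{rn+r}} = (5F_{rn}^2 + 3(-1)^{rn})(L_{rn+r}^2 - 3(-1)^{rn+r}),
\]
and expands the product so as to isolate the leading term $5F_{rn}^2 L_{rn+r}^2$.

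The key rewriting step uses identity~\eqref{equ.yb05ue2} with $u=rn+r$, $v=rn$, which gives
\[
F_{rn}L_{rn+r} = F_{2rn+r} - (-1)^{rn}F_r.
\]
Squaring produces the desired $F_{2rn+r}^2 + F_r^2$ together with a cross term $-2(-1)^{rn}F_r F_{2rn+r}$. I would eliminate this cross term by invoking~\eqref{equ.q79u3q4} in the form $5F_rF_{2rn+r}=L_{2rn+2r}-(-1)^rL_{2rn}$, and then applying the doubling identity~\eqref{equ.fpjkos2} to replace $L_{2rn+2r}$ and $L_{2rn}$ by $L_{rn+r}^2$ and $L_{rn}^2$ (plus sign constants).

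Substituting back into the expansion from the first step and applying~\eqref{equ.mpb5qh0} to convert any residual $L_{rn}^2$ and $L_{rn+r}^2$ into $5F_{rn}^2$ and $5F_{rn+r}^2$ should assemble the right-hand side of the lemma. The identification $(-1)^{rn+r}=(-1)^{(n-1)r}$ (since $(-1)^{2r}=1$) recasts the $F_{rn}^2$ coefficient into the stated form, and the residual $(-1)^r$ constants should combine into the claimed tail $(-1)^r 3$.

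The main obstacle is pure sign book-keeping: the squaring, the two doubling rewrites, and the two $L\leftrightarrow 5F$ conversions each introduce $(-1)^{\bullet}$ prefactors depending on the parities of $rn$, $rn+r$, and $r$, and these must telescope cleanly into $(-1)^r 3$. A numerical sanity check at, say, $r=n=1$ and $r=2,\,n=1$ would confirm the coefficients before writing the final derivation.
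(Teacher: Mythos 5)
Your proposal is correct and follows exactly the route the paper intends: the paper states Lemma~\ref{thm.powhazn} without proof as an analogue of Lemma~\ref{thm.c6tpo1r}, whose proof expands $(L_{2rn+r}+(-1)^{rn}L_r)^2$ just as you expand $(F_{2rn+r}-(-1)^{rn}F_r)^2$ and clear the cross term via \eqref{equ.q79u3q4} and the doubling formulas. The sign bookkeeping you defer does close up: the $\pm 8(-1)^r$ contributions from the two $L^2\to 5F^2$ conversions cancel, leaving $5F_{2rn+r}^2+5F_r^2+5(-1)^{rn}F_{rn+r}^2-5(-1)^{rn+r}F_{rn}^2+3(-1)^r$, which is the stated right-hand side since $(-1)^{(n-1)r}=(-1)^{rn+r}$.
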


\section{Main results}
\subsection{Sums of cubes of Fibonacci numbers}
\begin{thm}\label{thm.antrd3y}
If $r$ and $n$ are integers such that $r$ is odd, then
\[
L_{3r}\sum_{k=1}^nF_{2rk}^3=
\begin{cases}
F_{rn}L_{rn+r}(L_{rn}F_{rn+r}F_{2rn+r}-2F_r^2) & \text{if $n$ is even,}\\
L_{rn}F_{rn+r}(F_{rn}L_{rn+r}F_{2rn+r}-2F_r^2) & \text{if $n$ is odd\,.}
\end{cases}
\]
\end{thm}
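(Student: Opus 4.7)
The plan is to invert the trisection identity~\eqref{equ.oyvdke0}, converting the cube $F_{2rk}^3$ into first-power Fibonacci terms that the summation Lemma~\ref{thm.esyph89} can handle, and then to recognise the resulting expression as the factored form in the theorem via Lemmata~\ref{thm.ip29mgz} and~\ref{thm.c1bi001}.

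First I would set $u=2rk$ in~\eqref{equ.oyvdke0}, noting $(-1)^{2rk}=1$, to obtain $5F_{2rk}^3 = F_{6rk} - 3F_{2rk}$. Summing over $k$ and multiplying by $L_r L_{3r}$ gives
\[
5\,L_r L_{3r} \sum_{k=1}^n F_{2rk}^3 \;=\; L_r\!\left(L_{3r}\sum_{k=1}^n F_{6rk}\right) - 3 L_{3r}\!\left(L_r\sum_{k=1}^n F_{2rk}\right).
\]
Since $r$ is odd, so is $3r$, and Lemma~\ref{thm.esyph89}(ii) applies to both bracketed sums. In the case $n$ even they evaluate to $F_{3rn}L_{3rn+3r}$ and $F_{rn}L_{rn+r}$ respectively; in the case $n$ odd they evaluate to $L_{3rn}F_{3rn+3r}$ and $L_{rn}F_{rn+r}$.

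Next, for $n$ even I would apply Lemma~\ref{thm.c1bi001} (and for $n$ odd, Lemma~\ref{thm.ip29mgz}), using $(-1)^r=-1$, to factor
\[
F_{3rn}L_{3rn+3r} \;=\; F_{rn}L_{rn+r}\bigl(5L_{rn}F_{rn+r}F_{2rn+r} + L_{2r} - 1\bigr),
\]
with the analogous expansion in the odd case. The leading $5L_r L_{rn}F_{rn+r}F_{2rn+r}$ contribution on the right already matches $5L_r$ times the claimed leading term, so after pulling out the common factor $F_{rn}L_{rn+r}$ (resp.\ $L_{rn}F_{rn+r}$) the theorem collapses to the scalar identity
\[
L_r(L_{2r}-1) - 3 L_{3r} \;=\; -10\,L_r F_r^2.
\]

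This last scalar identity is the main obstacle, though the computation is routine. I would dispatch it by substituting $L_{3r} = L_r^3 + 3L_r$ (from~\eqref{equ.aiunttr} at $u=r$ with $r$ odd), $L_{2r} = L_r^2 + 2$ (from~\eqref{equ.fpjkos2} with $r$ odd), and $L_r^2 = 5F_r^2 - 4$ (from~\eqref{equ.mpb5qh0} with $r$ odd): the left-hand side collapses to $-2L_r(L_r^2+4) = -2L_r \cdot 5F_r^2$, matching the right-hand side. The bookkeeping hurdle is in choosing the right multiplier $L_r L_{3r}$ at the outset so that both first-power sums collapse without fractions and the residual constant is exactly what is needed to produce the $-2F_r^2$ tail in the theorem.
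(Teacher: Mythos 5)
Your proposal is correct and follows essentially the same route as the paper: reduce cubes to first powers via $F_{3u}=5F_u^3+3(-1)^uF_u$, evaluate the two first-power sums with Lemma~\ref{thm.esyph89}(ii), factor via Lemmata~\ref{thm.c1bi001} and~\ref{thm.ip29mgz}, and close with $L_{2r}=L_r^2+2$ and $L_r^2=5F_r^2-4$; your multiplier $L_rL_{3r}$ is just the paper's $L_{3r}/L_r=L_r^2+3$ in disguise. (Your assignment of Lemma~\ref{thm.ip29mgz} to the odd-$n$ branch is in fact the correct citation, which the paper's proof slightly garbles.)
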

\begin{proof}
Setting $u=2rk$ in identity~\eqref{equ.oyvdke0} and summing, we have
\[
5\sum_{k = 1}^n {F_{2rk}^3 }  = \sum_{k = 1}^n {F_{6rk} }  - 3\sum_{k = 1}^n {F_{2rk} }\,,
\]
so that,
\begin{equation}\label{equ.dpiq8yz}
\begin{split}
5L_{3r} \sum_{k = 1}^n {F_{2rk}^3 } &= L_{3r} \sum_{k = 1}^n {F_{6rk} }  - 3\frac{{L_{3r} }}{{L_r }}L_r \sum_{k = 1}^n {F_{2rk} }\\ 
&= L_{3r} \sum_{k = 1}^n {F_{6rk} }  - 3(L_r^2  + 3)L_r \sum_{k = 1}^n {F_{2rk} }\,. 
\end{split}
\end{equation}
%%numbered list
\begin{itemize}
\item If $n$ is even, then, by Lemma~\ref{thm.esyph89}, identity~\eqref{equ.dpiq8yz} can be written as
\[
5L_{3r} \sum_{k = 1}^n {F_{2rk}^3 }  = F_{3rn} L_{3rn + 3r}  - 3(L_r^2  + 3)F_{rn} L_{rn + r}\,, 
\]
so that
\[
\begin{split}
\frac{{5L_{3r} \sum_{k = 1}^n {F_{2rk}^3 } }}{{F_{rn} L_{rn + r} }} &= \frac{{F_{3rn} L_{3rn + 3r} }}{{F_{rn} L_{rn + r} }} - 3(L_r^2  + 3)\\
&= 5L_{rn} F_{rn + r} F_{2rn + r}  + L_{2r}  - 1 - 3L_r^2  - 9,\quad\mbox{by Lemma~\ref{thm.c1bi001}}\\
&= 5L_{rn} F_{rn + r} F_{2rn + r}  - 10F_r^2,\quad\mbox{by \eqref{equ.mpb5qh0} and \eqref{equ.fpjkos2}}\,. 
\end{split}
\]
\item If $n$ is odd, then, by Lemma~\ref{thm.esyph89}, we have
\[
5L_{3r} \sum_{k = 1}^n {F_{2rk}^3 }  = L_{3rn} F_{3rn + 3r}  - 3(L_r^2  + 3)L_{rn} F_{rn + r}\,, 
\]
so that
\[
\begin{split}
\frac{{5L_{3r} \sum_{k = 1}^n {F_{2rk}^3 } }}{{L_{rn} F_{rn + r} }} &= \frac{{L_{3rn} F_{3rn + 3r} }}{{L_{rn} F_{rn + r} }} - 3(L_r^2  + 3)\\
&= 5F_{rn} L_{rn + r} F_{2rn + r}  + L_{2r}  - 1 - 3L_r^2  - 9,\quad\mbox{by Lemma~\ref{thm.c1bi001}}\\
&= 5F_{rn} L_{rn + r} F_{2rn + r}  - 10F_r^2,\quad\mbox{by \eqref{equ.mpb5qh0} and \eqref{equ.fpjkos2}}\,. 
\end{split}
\]
\end{itemize}
\end{proof}
\begin{thm}
If $r$ and $n$ are integers such that $r$ is even, then
\[
5F_{3r} \sum_{k = 1}^n {F_{2rk}^3 }  =  F_{rn} F_{rn + r}(L_{rn} L_{rn + r} L_{2rn + r}  - 2L_r^2)\,.
\]
\end{thm}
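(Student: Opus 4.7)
The plan is to mirror the strategy used in Theorem~\ref{thm.antrd3y}, exploiting the fact that for even $r$ the sign factors $(-1)^r$ collapse to $+1$, making Lemma~\ref{thm.esyph89}(i) and Lemma~\ref{thm.w3ql3gi} immediately applicable without the parity split on $n$.

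First I would set $u=2rk$ in identity~\eqref{equ.oyvdke0} and sum from $k=1$ to $n$, obtaining
\[
5\sum_{k=1}^n F_{2rk}^3 \;=\; \sum_{k=1}^n F_{6rk} \;-\; 3\sum_{k=1}^n F_{2rk}.
\]
Multiplying through by $F_{3r}$ and noting that $3r$ is even, Lemma~\ref{thm.esyph89}(i) applied with parameter $3r$ gives $F_{3r}\sum_{k=1}^n F_{6rk}=F_{3rn}F_{3rn+3r}$, while the same lemma with parameter $r$ yields $F_r\sum_{k=1}^n F_{2rk}=F_{rn}F_{rn+r}$. Hence
\[
5F_{3r}\sum_{k=1}^n F_{2rk}^3 \;=\; F_{3rn}F_{3rn+3r} \;-\; 3\,\frac{F_{3r}}{F_r}\,F_{rn}F_{rn+r}.
\]

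Next I would simplify the ratio $F_{3r}/F_r$. From~\eqref{equ.oyvdke0} with $u=r$ even, $F_{3r}/F_r=5F_r^2+3$, which by~\eqref{equ.mpb5qh0} equals $L_r^2-1$; alternatively, using~\eqref{equ.fpjkos2} with $u=r$ even, $L_r^2=L_{2r}+2$, so $F_{3r}/F_r = L_{2r}+1$. Dividing both sides of the displayed equation by $F_{rn}F_{rn+r}$ and invoking Lemma~\ref{thm.w3ql3gi} (with $(-1)^{r-1}=-1$ since $r$ is even) produces
\[
\frac{5F_{3r}\sum_{k=1}^n F_{2rk}^3}{F_{rn}F_{rn+r}} \;=\; L_{rn}L_{rn+r}L_{2rn+r} + L_{2r} - 1 - 3(L_{2r}+1) \;=\; L_{rn}L_{rn+r}L_{2rn+r} - 2L_{2r} - 4.
\]

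The only remaining step is to recognize, again via~\eqref{equ.fpjkos2} with $u=r$ even, that $2L_{2r}+4 = 2L_r^2$, which converts the constant tail into $-2L_r^2$ and yields the claimed identity after multiplying back through by $F_{rn}F_{rn+r}$. There is no real obstacle here: the work is entirely a bookkeeping exercise in combining Lemmas~\ref{thm.esyph89}(i) and~\ref{thm.w3ql3gi} with the two elementary relations $F_{3r}/F_r=L_{2r}+1$ and $L_r^2=L_{2r}+2$, the latter two being precisely what the evenness of $r$ buys us.
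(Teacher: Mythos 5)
Your proof is correct and follows essentially the same route as the paper's: reduce via $F_{6rk}=5F_{2rk}^3+3F_{2rk}$, apply Lemma~\ref{thm.esyph89}(i) with parameters $3r$ and $r$, then divide by $F_{rn}F_{rn+r}$ and invoke Lemma~\ref{thm.w3ql3gi}. The only difference is cosmetic: you rewrite $F_{3r}/F_r$ as $L_{2r}+1$ before collecting constants, whereas the paper keeps it as $5F_r^2+3$ and simplifies at the end; both yield $-2L_r^2$.
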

\begin{proof}
\[
\begin{split}
5F_{3r} \sum_{k = 1}^n {F_{2rk}^3 }  &= F_{3r} \sum_{k = 1}^n {F_{6rk} }  - 3\frac{{F_{3r} }}{{F_r }}F_r \sum_{k = 1}^n {F_{2rk} }\\
&= F_{3rn} F_{3rn + 3r}  - 3(5F_r^2  + 3)F_{rn} F_{rn + r},\\
& \mbox{by Lemma \ref{thm.esyph89} and identity \eqref{equ.oyvdke0}}\,,
\end{split}
\]
so that
\[
\begin{split}
\frac{{5F_{3r} \sum_{k = 1}^n {F_{2rk}^3 } }}{{F_{rn} F_{rn + r} }} &= \frac{{F_{3rn} F_{3rn + 3r} }}{{F_{rn} F_{rn + r} }} - 3(5F_r^2  + 3)\\
&= L_{rn} L_{rn + r} L_{2rn + r}  + L_{2r}  - 1 - 15F_r^2  - 9\\
&\mbox{(by Lemma \ref{thm.w3ql3gi} and identity \eqref{equ.oyvdke0})}\,,\\
&= L_{rn} L_{rn + r} L_{2rn + r}  - 2L_r^2,\quad\mbox{by \eqref{equ.mpb5qh0}, \eqref{equ.fpjkos2} and \eqref{equ.gr4yegj}}\,.
\end{split}
\]

\end{proof}
\begin{thm}
If $r$ and $n$ are integers such that $r$ is odd, then
\[
L_{3r}\sum_{k=1}^nF_{2rk}^3=
\begin{cases}
F_{rn}^2L_{rn+r}^2(L_{rn}F_{rn+r}+F_r) & \text{if $n$ is even,}\\
L_{rn}^2F_{rn+r}^2(F_{rn}L_{rn+r}+F_r) & \text{if $n$ is odd\,.}
\end{cases}
\]
\end{thm}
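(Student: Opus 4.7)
The plan is to deduce this factored form directly from Theorem~\ref{thm.antrd3y}, rather than re-running the full summation argument. That earlier theorem already provides, for $n$ even,
\[
L_{3r}\sum_{k=1}^n F_{2rk}^3 = F_{rn}L_{rn+r}\bigl(L_{rn}F_{rn+r}F_{2rn+r} - 2F_r^2\bigr),
\]
and symmetrically for $n$ odd. So the task reduces to the purely algebraic identity
\[
L_{rn}F_{rn+r}F_{2rn+r} - 2F_r^2 = F_{rn}L_{rn+r}\bigl(L_{rn}F_{rn+r}+F_r\bigr)
\]
(for $n$ even), together with its analogue for $n$ odd.

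To verify this, I would set $u=rn+r$, $v=rn$ in identities~\eqref{equ.yb05ue2} and~\eqref{equ.jjkzwa0}, which yield the two linearizations
\[
F_{2rn+r} = F_{rn}L_{rn+r} + (-1)^{rn}F_r,\qquad F_{2rn+r} = L_{rn}F_{rn+r} - (-1)^{rn}F_r.
\]
Subtracting these produces the companion identity $L_{rn}F_{rn+r} - F_{rn}L_{rn+r} = 2(-1)^{rn}F_r$. For $n$ even with $r$ odd, $(-1)^{rn}=1$, so I obtain simultaneously $F_{2rn+r} = F_{rn}L_{rn+r}+F_r$ and $L_{rn}F_{rn+r} - 2F_r = F_{rn}L_{rn+r}$.

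Substituting the first relation into $L_{rn}F_{rn+r}F_{2rn+r} - 2F_r^2$ and then using the second to collapse $L_{rn}F_{rn+r} - 2F_r$ to $F_{rn}L_{rn+r}$ produces the claimed factored form in one line of algebra. The case $n$ odd is entirely parallel: there $(-1)^{rn}=-1$, which interchanges the roles of the two "mixed" products $F_{rn}L_{rn+r}$ and $L_{rn}F_{rn+r}$ in the linearizations, and the corresponding second branch of the theorem drops out by the same manipulation. The main challenge is minor — just careful bookkeeping with the $(-1)^{rn}$ sign under the two parity hypotheses — and no further Fibonacci--Lucas identities beyond~\eqref{equ.yb05ue2} and~\eqref{equ.jjkzwa0} are required.
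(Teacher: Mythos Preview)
Your argument is correct and, in fact, shorter than the paper's. The paper does not derive this theorem from Theorem~\ref{thm.antrd3y}; instead it re-runs the summation argument from identity~\eqref{equ.dpiq8yz}, applies the alternative expansions of Lemmata~\ref{thm.kquo1a5} and~\ref{thm.powhazn} for $L_{3rn}F_{3rn+3r}/(L_{rn}F_{rn+r})$ and $F_{3rn}L_{3rn+3r}/(F_{rn}L_{rn+r})$, and then collapses the resulting sums of squares $F_{2rn+r}^2+F_{rn+r}^2+F_{rn}^2-2F_r^2$ into products via the Howard identity~\eqref{equ.wwhhpdy}. Your route bypasses all of this by recognising that the two versions of the theorem differ only by a quadratic identity in Fibonacci--Lucas numbers, which you prove directly from the addition formulas~\eqref{equ.yb05ue2} and~\eqref{equ.jjkzwa0}. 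The gain is economy and a clear explanation of \emph{why} the two factored forms are equivalent; the trade-off is that your proof is not self-contained but relies on Theorem~\ref{thm.antrd3y} having been established first, whereas the paper's two proofs are logically independent derivations from the same starting point.
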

\begin{proof}
%%numbered list
\begin{itemize}
\item If $n$ is even, then from Lemma~\ref{thm.esyph89} and identity~\eqref{equ.dpiq8yz} we have
\[
\begin{split}
\frac{{5L_{3r} \sum_{k = 1}^n {F_{2rk}^3 } }}{{F_{rn} L_{rn + r} }} &= \frac{{F_{3rn} L_{3rn + 3r} }}{{F_{rn} L_{rn + r} }} - 3(L_r^2  + 3)\\
&= 5F_{2rn + r}^2  + 5F_{rn + r}^2  + 5F_{rn}^2  + 5F_r^2  - 3 - 3L_r^2  - 9,\,\mbox{by Lemma~\ref{thm.powhazn}}\\
&= 5F_{2rn + r}^2  + 5F_{rn + r}^2  + 5F_{rn}^2  - 10F_r^2\quad\mbox{by identity~\eqref{equ.mpb5qh0}}\,,
\end{split}
\]
so that
\[
\begin{split}
\frac{{L_{3r} \sum_{k = 1}^n {F_{2rk}^3 } }}{{F_{rn} L_{rn + r} }} &= F_{2rn + r}^2  + F_{rn + r}^2  + F_{rn}^2  - 2F_r^2\\
&= (F_{2rn + r}^2  - F_r^2 ) + (F_{rn + r}^2+ F_{rn}^2)  - F_r^2\,.
\end{split}
\]
Using the following identity, derived in \cite{howard},
\begin{equation}
F_u^2 + (-1)^{u+v-1}F_v^2=F_{u-v}F_{u+v}\,,\label{equ.wwhhpdy}  
\end{equation}
we have
\[
\begin{split}
\frac{{L_{3r} \sum_{k = 1}^n {F_{2rk}^3 } }}{{F_{rn} L_{rn + r} }}&= F_{2rn} F_{2rn + 2r}  + F_r F_{2rn + r}  - F_r^2\\
&= F_{2rn} F_{2rn + 2r}  + F_r (F_{2rn + r}  - F_r )\\
&= F_{2rn} F_{2rn + 2r}  + F_r F_{rn} L_{rn + r},\,\mbox{by identity~\eqref{equ.yb05ue2}}\\ 
&= F_{rn} L_{rn + r} L_{rn} F_{rn + r}  + F_r F_{rn} L_{rn + r}\\ 
&= F_{rn} L_{rn + r} (L_{rn} F_{rn + r}  + F_r )\,.
\end{split}
\]
\item If $n$ is odd, then from Lemma~\ref{thm.esyph89} and identity~\eqref{equ.dpiq8yz} we have
\[
\begin{split}
\frac{{5L_{3r} \sum_{k = 1}^n {F_{2rk}^3 } }}{{L_{rn} F_{rn + r} }} &= \frac{{L_{3rn} F_{3rn + 3r} }}{{L_{rn} F_{rn + r} }} - 3(L_r^2  + 3)\\
&= 5F_{2rn + r}^2  + 5F_{rn + r}^2  + 5F_{rn}^2  + 5F_r^2  - 3 - 3L_r^2  - 9,\,\mbox{by Lemma~\ref{thm.kquo1a5}}\\
&= 5F_{2rn + r}^2  + 5F_{rn + r}^2  + 5F_{rn}^2  - 10F_r^2\quad\mbox{by identity~\eqref{equ.mpb5qh0}}\,,
\end{split}
\]
so that
\[
\begin{split}
\frac{{L_{3r} \sum_{k = 1}^n {F_{2rk}^3 } }}{{L_{rn} F_{rn + r} }} &= F_{2rn + r}^2  + F_{rn + r}^2  + F_{rn}^2  - 2F_r^2\\
&= (F_{2rn + r}^2  - F_r^2 ) + (F_{rn + r}^2+ F_{rn}^2)  - F_r^2\,.
\end{split}
\]
Using identity~\eqref{equ.wwhhpdy}, we have
\[
\begin{split}
\frac{{L_{3r} \sum_{k = 1}^n {F_{2rk}^3 } }}{{L_{rn} F_{rn + r} }}&= F_{2rn} F_{2rn + 2r}  + F_r F_{2rn + r}  - F_r^2\\
&= F_{2rn} F_{2rn + 2r}  + F_r (F_{2rn + r}  - F_r )\\
&= F_{2rn} F_{2rn + 2r}  + F_r L_{rn} F_{rn + r},\,\mbox{by identity~\eqref{equ.jjkzwa0}}\\ 
&= F_{rn} L_{rn + r} L_{rn} F_{rn + r}  + F_r L_{rn} F_{rn + r}\\ 
&= L_{rn} F_{rn + r} (F_{rn} L_{rn + r}  + F_r )\,.
\end{split}
\]

\end{itemize}
\end{proof}
\begin{thm}
If $r$ and $n$ are integers such that $r$ is even, then
\begin{equation}
F_{3r} \sum_{k = 1}^n {F_{2rk}^3 }  = F_{rn}^2 F_{rn + r}^2 (L_{rn} L_{rn + r}  + L_r )\,.
\end{equation}
\end{thm}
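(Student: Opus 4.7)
The plan is to follow the same template as Theorem~\ref{thm.antrd3y} and the preceding even-$r$ theorem, but to substitute Lemma~\ref{thm.c6tpo1r} for Lemma~\ref{thm.w3ql3gi}. Lemma~\ref{thm.w3ql3gi} naturally produces the product $L_{rn}L_{rn+r}L_{2rn+r}$, which led to the other factored form; the factorization $F_{rn}^2F_{rn+r}^2(L_{rn}L_{rn+r}+L_r)$ demanded here calls instead for the squared-Lucas version provided by Lemma~\ref{thm.c6tpo1r}.

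As before, I would set $u=2rk$ in~\eqref{equ.oyvdke0}, sum from $k=1$ to $n$, multiply by $F_{3r}$, and use $F_{3r}/F_r=5F_r^2+3$ (the $r$ even case of~\eqref{equ.oyvdke0} with $u=r$). Two applications of Lemma~\ref{thm.esyph89}(i), with moduli $r$ and $3r$ respectively (both even), then give
\[
5F_{3r}\sum_{k=1}^n F_{2rk}^3 = F_{3rn}F_{3rn+3r}-3(5F_r^2+3)F_{rn}F_{rn+r}.
\]
Dividing by $F_{rn}F_{rn+r}$ and invoking Lemma~\ref{thm.c6tpo1r}---where, for $r$ even, the signs $(-1)^{nr}$ and $(-1)^{(n-1)r}$ collapse to $+1$ and $(-1)^{r-1}$ to $-1$---yields
\[
\frac{5F_{3r}\sum F_{2rk}^3}{F_{rn}F_{rn+r}} = L_{2rn+r}^2+L_{rn+r}^2+L_{rn}^2+L_r^2-7-15F_r^2-9.
\]
Replacing $15F_r^2$ by $3L_r^2-12$ via~\eqref{equ.mpb5qh0} (with $r$ even) collapses the constants to $-2L_r^2-4$.

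The remaining task is to verify
\[
L_{2rn+r}^2+L_{rn+r}^2+L_{rn}^2-2L_r^2-4 \;=\; 5F_{rn}F_{rn+r}\bigl(L_{rn}L_{rn+r}+L_r\bigr).
\]
I would expand the right-hand side using $L_{rn}L_{rn+r}=L_{2rn+r}+L_r$ and $5F_{rn}F_{rn+r}=L_{2rn+r}-L_r$ (both obtained from~\eqref{equ.ztsb3uk} and~\eqref{equ.q79u3q4} with $r$ even), reducing it to $L_{2rn+r}^2+L_r L_{2rn+r}-2L_r^2$. The claim then boils down to the clean identity $L_r L_{2rn+r}=L_{rn+r}^2+L_{rn}^2-4$, which I would obtain by applying~\eqref{equ.ztsb3uk} to split $L_r L_{2rn+r}$ as $L_{2rn+2r}+L_{2rn}$ and then~\eqref{equ.fpjkos2} to rewrite each $L_{2u}$ as $L_u^2-2$ (valid since $rn$ and $rn+r$ are even). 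The only real obstacle is choosing the right squared-Lucas variant at the outset; after that, the proof is bookkeeping parallel to the preceding two theorems.
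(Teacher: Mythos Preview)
Your proposal is correct and follows essentially the same route as the paper: linearize via~\eqref{equ.oyvdke0}, apply Lemma~\ref{thm.esyph89}(i) twice, divide by $F_{rn}F_{rn+r}$, and invoke Lemma~\ref{thm.c6tpo1r}. The only divergence is in the final algebra: the paper converts $L_{rn}^2-4$ to $5F_{rn}^2$, groups the result as $(L_{2rn+r}^2-L_r^2)+(L_{rn+r}^2-L_r^2)+5F_{rn}^2$, and applies Howard's identity~\eqref{equ.wodrq78} twice to factor each difference of Lucas squares; you instead expand the target right-hand side via~\eqref{equ.q79u3q4} and~\eqref{equ.ztsb3uk} and reduce to the single identity $L_rL_{2rn+r}=L_{rn+r}^2+L_{rn}^2-4$. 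Both finish cleanly; yours avoids introducing~\eqref{equ.wodrq78} at the cost of working backward from the answer, while the paper's grouping makes the factor $F_{rn}F_{rn+r}$ emerge forward from each term.
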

\begin{proof}
\[
\begin{split}
5F_{3r} \sum_{k = 1}^n {F_{2rk}^3 }  &= F_{3r} \sum_{k = 1}^n {F_{6rk} }  - 3\frac{{F_{3r} }}{{F_r }}F_r \sum_{k = 1}^n {F_{2rk} }\\
&= F_{3rn} F_{3rn + 3r}  - 3(5F_r^2  + 3)F_{rn} F_{rn + r}\,,
\end{split}
\]
so that
\[
\begin{split}
\frac{{5F_{3r} \sum_{k = 1}^n {F_{2rk}^3 } }}{{F_{rn} F_{rn + r} }} &= \frac{{F_{3rn} F_{3rn + 3r} }}{{F_{rn} F_{rn + r} }} - 3(5F_r^2  + 3)\\
&=L_{2rn+r}^2+L_{rn+r}^2+L_{rn}^2+L_r^2-7 - 15F_r^2  - 9,\quad\mbox{by Lemma~\ref{thm.c6tpo1r}}\\
&=L_{2rn+r}^2+L_{rn+r}^2-2L_r^2+5F_{rn}^2,\quad\mbox{by \eqref{equ.mpb5qh0}}\\
&=(L_{2rn+r}^2-L_r^2)+(L_{rn+r}^2-L_r^2)+5F_{rn}^2\,.
\end{split}
\]
Using the identity (derived in \cite{howard})
\begin{equation}
L_u^2 + (-1)^{u+v-1}L_v^2=5F_{u-v}F_{u+v}\,,\label{equ.wodrq78}  
\end{equation}
we see that
\begin{equation}
L_{2rn + r}^2  - L_r^2  = 5F_{2rn} F_{2rn + 2r}  = 5F_{rn} F_{rn + r} L_{rn} L_{rn + r}
\end{equation}
and
\begin{equation}
L_{rn + r}^2  - L_r^2  = 5F_{rn} F_{rn + 2r}\,.
\end{equation}
Thus,
\[
\begin{split}
\frac{{F_{3r} \sum_{k = 1}^n {F_{2rk}^3 } }}{{F_{rn} F_{rn + r} }} &= F_{rn} F_{rn + r} L_{rn} L_{rn + r}  + F_{rn} F_{rn + 2r}  + F_{rn}^2\\ 
&= F_{rn} F_{rn + r} L_{rn} L_{rn + r}  + F_{rn} (F_{rn}  + F_{rn + 2r} )\\
&= F_{rn} F_{rn + r} L_{rn} L_{rn + r}  + F_{rn} F_{rn + r} L_r,\quad\mbox{by identity \eqref{equ.jjkzwa0}}\\ 
&= F_{rn} F_{rn + r} (L_{rn} L_{rn + r}  + L_r )\,.
\end{split}
\]

\end{proof}

\subsection{Sums of cubes of Lucas numbers}
\begin{thm}\label{thm.antrd3y}
If $r$ and $n$ are integers such that $r$ is odd, then
\[
L_{3r}\sum_{k=1}^nL_{2rk}^3=
\begin{cases}
5F_{rn}F_{rn+r}(L_{rn}L_{rn+r}L_{2rn+r}+4(L_{2r}+1)) & \text{if $n$ is even,}\\
L_{rn}L_{rn+r}(5F_{rn}F_{rn+r}L_{2rn+r}+4(L_{2r}+1)) & \text{if $n$ is odd\,.}
\end{cases}
\]
\end{thm}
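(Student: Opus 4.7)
The plan is to mirror the strategy used for Theorem~\ref{thm.antrd3y} (the Fibonacci analogue), exploiting the cubing identity for Lucas numbers together with Lemma~\ref{thm.hnf079l} and one of the ratio identities (Lemma~\ref{thm.cdcw5y2} or Lemma~\ref{thm.w3ql3gi}). Setting $u = 2rk$ in the identity~\eqref{equ.aiunttr}, so that $(-1)^u=1$, yields $L_{2rk}^3 = L_{6rk} + 3 L_{2rk}$. Summing over $k$ from $1$ to $n$ and multiplying by $L_{3r}$ gives
\begin{equation*}
L_{3r}\sum_{k=1}^n L_{2rk}^3 \;=\; L_{3r}\sum_{k=1}^n L_{6rk} \;+\; 3\,\frac{L_{3r}}{L_r}\,\Bigl(L_r\sum_{k=1}^n L_{2rk}\Bigr).
\end{equation*}
For $r$ odd, identity~\eqref{equ.aiunttr} with $u=r$ gives $L_{3r}/L_r = L_r^2 + 3$, so the key constant factor out front is under control.

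Next I would apply Lemma~\ref{thm.hnf079l}(ii) twice: once in the form written (with parameter $r$) to evaluate $L_r\sum_{k=1}^n L_{2rk}$, and once with $r$ replaced by $3r$ (still odd) to evaluate $L_{3r}\sum_{k=1}^n L_{6rk}$. This introduces a case split on the parity of $n$. When $n$ is even, both sums produce the $5F_{\cdot}F_{\cdot}$ form, so after dividing through by $5F_{rn}F_{rn+r}$ one reduces the first term via Lemma~\ref{thm.w3ql3gi}. When $n$ is odd, both sums produce the $L_{\cdot}L_{\cdot}$ form, and after dividing through by $L_{rn}L_{rn+r}$ one reduces the first term via Lemma~\ref{thm.cdcw5y2}. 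In either case, the ratio lemma contributes $L_{2rn+r}$ (the desired middle factor) together with a constant $L_{2r} + (-1)^{r-1} = L_{2r}+1$ (since $r$ is odd).

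The remaining task is purely an identity among constants: one must show
\begin{equation*}
L_{2r} + 1 + 3(L_r^2 + 3) \;=\; 4(L_{2r} + 1),
\end{equation*}
which follows immediately from $L_{2r} = L_r^2 + 2$ (the $r$-odd specialisation of~\eqref{equ.fpjkos2}). Putting the pieces together produces exactly the stated closed forms in the two parity cases.

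I do not foresee any genuine obstacle: every ingredient is already in place, and the parity case-split runs in perfect parallel to the Fibonacci version just proved. The only thing to be careful about is matching the correct ratio lemma (\ref{thm.w3ql3gi} for the even-$n$ branch, \ref{thm.cdcw5y2} for the odd-$n$ branch) with the correct denominator coming out of Lemma~\ref{thm.hnf079l}(ii) applied to $3r$, and then checking that the constants collapse to $4(L_{2r}+1)$ as above.
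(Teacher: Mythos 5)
Your proposal is correct and follows essentially the same route as the paper: the same decomposition $L_{2rk}^3 = L_{6rk} + 3L_{2rk}$ via identity~\eqref{equ.aiunttr}, the same double application of Lemma~\ref{thm.hnf079l}(ii) (with $r$ and with $3r$), the same pairing of the even-$n$ branch with Lemma~\ref{thm.w3ql3gi} and the odd-$n$ branch with Lemma~\ref{thm.cdcw5y2}, and the same collapse of the constants to $4(L_{2r}+1)$ via $L_{2r}=L_r^2+2$. No gaps.
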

\begin{proof}
Using identity~\eqref{equ.aiunttr} with $u=2rk$, we have
\[
\sum_{k = 1}^n {L_{2rk}^3 }  = \sum_{k = 1}^n {L_{6rk} }  + 3\sum_{k = 1}^n {L_{2rk} }\,, 
\]
so that
\[
\begin{split}
L_{3r} \sum_{k = 1}^n {L_{2rk}^3 }  &= L_{3r} \sum_{k = 1}^n {L_{6rk} }  + 3\frac{{L_{3r} }}{{L_r }}L_r \sum_{k = 1}^n {L_{2rk} }\\ 
&= L_{3r} \sum_{k = 1}^n {L_{6rk} }  + 3(L_r^2  + 3)L_r \sum_{k = 1}^n {L_{2rk} },\quad\mbox{by \eqref{equ.aiunttr}}\,.\\ 
\end{split}
\]
%%numbered list
\begin{itemize}
\item If $n$ is even, then by Lemma~\ref{thm.hnf079l} we have
\begin{equation}
L_{3r}\sum_{k = 1}^n {L_{2rk}^3 }  = 5F_{3rn} F_{3rn + 3r}  + 3(L_r^2  + 3)5F_{rn} F_{rn + r}\,,
\end{equation} 
so that
\[
\begin{split}
\frac{{L_{3r}\sum_{k = 1}^n {L_{2rk}^3 } }}{{5F_{rn} F_{rn + r} }} &= \frac{{F_{3rn} F_{3rn + 3r} }}{{F_{rn} F_{rn + r} }} + 3(L_r^2  + 3)\\
&= L_{rn} L_{rn + r} L_{2rn + r}  + L_{2r}  + 1 + 3L_r^2  + 9,\quad\mbox{by Lemma \ref{thm.w3ql3gi}}\\
&= L_{rn} L_{rn + r} L_{2rn + r}  + 4(L_{2r}  + 1),\quad\mbox{by \eqref{equ.fpjkos2}}\,.
\end{split}
\]
\item If $n$ is odd, then by Lemma~\ref{thm.hnf079l} we have
\begin{equation}
L_{3r}\sum_{k = 1}^n {L_{2rk}^3 }  = L_{3rn} L_{3rn + 3r}  + 3(L_r^2  + 3)L_{rn} L_{rn + r}\,,
\end{equation} 
so that
\[
\begin{split}
\frac{{L_{3r}\sum_{k = 1}^n {L_{2rk}^3 } }}{{L_{rn} L_{rn + r} }} &= \frac{{L_{3rn} L_{3rn + 3r} }}{{L_{rn} L_{rn + r} }} + 3(L_r^2  + 3)\\
&= 5F_{rn} F_{rn + r} L_{2rn + r}  + L_{2r}  + 1 + 3L_r^2  + 9,\quad\mbox{by Lemma \ref{thm.cdcw5y2}}\\
&= 5F_{rn} F_{rn + r} L_{2rn + r}  + 4(L_{2r}  + 1),\quad\mbox{by \eqref{equ.fpjkos2}}\,.
\end{split}
\]
\end{itemize}
\end{proof}
\begin{thm}\label{thm.qa1rrs8}
If $r$ and $n$ are integers such that $r$ is even, then
\[
F_{3r} \sum_{k = 1}^n {L_{2rk}^3 }  = F_{rn} L_{rn + r} (5L_{rn}F_{rn + r}F_{2rn + r}   + 4(L_{2r}  + 1))\,.
\]
\end{thm}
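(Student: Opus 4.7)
The plan is to mirror closely the proof of the preceding Fibonacci-cube theorem for even $r$, but starting from the Lucas-cubing identity instead.

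First I would invoke identity \eqref{equ.aiunttr} with $u=2rk$. Since $(-1)^{2rk}=1$, this gives $L_{2rk}^3=L_{6rk}+3L_{2rk}$, so summing from $k=1$ to $n$ yields
\[
\sum_{k=1}^n L_{2rk}^3 = \sum_{k=1}^n L_{6rk} + 3\sum_{k=1}^n L_{2rk}.
\]
Multiplying through by $F_{3r}$ and inserting $F_r/F_r$ in the second sum, I would write
\[
F_{3r}\sum_{k=1}^n L_{2rk}^3 = F_{3r}\sum_{k=1}^n L_{6rk} + 3\,\frac{F_{3r}}{F_r}\,F_r\sum_{k=1}^n L_{2rk},
\]
and note that, again by \eqref{equ.oyvdke0}, for even $r$ we have $F_{3r}/F_r = 5F_r^2+3$.

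Next, since $r$ is even (and hence so is $3r$), part~(i) of Lemma~\ref{thm.hnf079l} applies to both sums: $F_{3r}\sum_{k=1}^n L_{6rk}=F_{3rn}L_{3rn+3r}$ and $F_r\sum_{k=1}^n L_{2rk}=F_{rn}L_{rn+r}$. Dividing the resulting identity through by $F_{rn}L_{rn+r}$ gives
\[
\frac{F_{3r}\sum_{k=1}^n L_{2rk}^3}{F_{rn}L_{rn+r}} = \frac{F_{3rn}L_{3rn+3r}}{F_{rn}L_{rn+r}} + 3(5F_r^2+3).
\]
Applying Lemma~\ref{thm.c1bi001} (with $(-1)^r=1$ since $r$ is even) rewrites the first term on the right as $5L_{rn}F_{rn+r}F_{2rn+r}+L_{2r}+1$.

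The only remaining step is a constant simplification: I need to show
\[
L_{2r}+1+3(5F_r^2+3) = 4(L_{2r}+1).
\]
This will be the main (but very short) obstacle, resolved by identity~\eqref{equ.gr4yegj}: since $r$ is even, $L_{2r}=5F_r^2+2$, so $15F_r^2=3L_{2r}-6$ and the left-hand side equals $L_{2r}+1+3L_{2r}-6+9=4L_{2r}+4$, as required. Multiplying back by $F_{rn}L_{rn+r}$ then yields the stated factored form.
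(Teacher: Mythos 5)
Your proposal is correct and follows essentially the same route as the paper's proof: cube via identity~\eqref{equ.aiunttr}, reduce both resulting sums with Lemma~\ref{thm.hnf079l}(i), divide by $F_{rn}L_{rn+r}$, apply Lemma~\ref{thm.c1bi001}, and clean up the constant with $L_{2r}=5F_r^2+2$. The only cosmetic difference is that the paper also cites~\eqref{equ.fpjkos2} alongside~\eqref{equ.gr4yegj} in the final simplification, whereas your use of~\eqref{equ.gr4yegj} alone already suffices.
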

\begin{proof}
\[
\begin{split}
F_{3r} \sum_{k = 1}^n {L_{2rk}^3 }  &= F_{3r} \sum_{k = 1}^n {L_{6rk} }  + 3\frac{{F_{3r} }}{{F_r }}F_r \sum_{k = 1}^n {L_{2rk} }\\ 
 &= F_{3r} \sum_{k = 1}^n {L_{6rk} }  + 3(5F_r^2  + 3)F_r \sum_{k = 1}^n {L_{2rk} },\quad\mbox{by identity \eqref{equ.oyvdke0}}\\ 
 &= F_{3rn} L_{3rn + 3r}  + 3(5F_r^2  + 3)F_{rn} L_{rn + r},\quad\mbox{by Lemma \ref{thm.hnf079l}}\,. 
\end{split}
\]
Thus,
\[
\begin{split}
\frac{{F_{3r} \sum_{k = 1}^n {L_{2rk}^3 } }}{{F_{rn} L_{rn + r} }} &= \frac{{F_{3rn} L_{3rn + 3r} }}{{F_{rn} L_{rn + r} }} + 3(5F_r^2  + 3)\\
&= 5L_{rn} F_{rn + r} F_{2rn + r}  + L_{2r}  + 1 + 15F_r^2  + 9,\quad\mbox{by Lemma \ref{thm.c1bi001}}\\
&= 5L_{rn} F_{rn + r} F_{2rn + r}  + 4(L_{2r}  + 1),\quad\mbox{by \eqref{equ.fpjkos2} and \eqref{equ.gr4yegj}}\,.
\end{split}
\]
\end{proof}

\end{document}